\newcommand{\np}{\vspace{3mm}\refstepcounter{subsection}\noindent{\bf \thesubsection.}
}
\numberwithin{equation}{section}
\newcommand{\Hom}{\operatorname{Hom}}
\renewcommand{\geq}{\geqslant}
\renewcommand{\leq}{\leqslant}
\newcommand{\Osh}{{\mathcal O}}                        %  Structure sheaf
\renewcommand{\H}{\mathrm{H}}                          %  Cohomology group 
\newcommand{\cchar}{\operatorname{char}}
\newcommand{\ii}{\operatorname{i}}
\newcommand{\NS}{\operatorname{NS}} %N\'{e}ron-Severi
\newcommand{\End}{\operatorname{End}} % endomorphism
\newcommand{\kk}{\mathbf{k}}
\newcommand{\Tr}{\operatorname{Tr}}
\newcommand{\Trd}{\operatorname{Trd}}
\newcommand{\Nr}{\operatorname{Nr}}
\newcommand{\Nrd}{\operatorname{Nrd}}
\newcommand{\N}{\operatorname{N}}
\renewcommand{\emptyset}{\varnothing}
\newcommand{\CC}{\mathbb{C}} % complex numbers
\newcommand{\QQ}{\mathbb{Q}} % rational numbers
\newcommand{\RR}{\mathbb{R}} % real numbers
\newcommand{\ZZ}{\mathbb{Z}} % integers
\newtheorem{theorem}{Theorem}[section]
\newtheorem{lemma}[theorem]{Lemma}
\newtheorem{corollary}[theorem]{Corollary}
\newtheorem{proposition}[theorem]{Proposition}
\theoremstyle{definition}
\begin{document}
\title[Reduced norms and the Riemann-Roch theorem]{Reduced norms and the Riemann-Roch theorem for Abelian varieties}

\author{Nathan Grieve}
\address{Department of Mathematics and Statistics,
University of New Brunswick,
Fredericton, NB, Canada}
\email{n.grieve@unb.ca}%

\begin{abstract} We explain how the Riemann-Roch theorem for divisors on an abelian variety $A$ is related to the reduced norms of the Wedderburn components of $\End^0(A)$ the $\QQ$-endomorphism algebra of $A$.  We then describe consequences and examples.
\end{abstract}
\thanks{\emph{Mathematics Subject Classification (2010):} 14K05.}

\maketitle
%\tableofcontents

\section{Introduction}\label{1}
Let $A$ be an abelian variety defined over a field $\kk$.  Our purpose here is to show how the Riemann-Roch theorem for divisors on $A$ is related to the reduced norms of the Wedderburn components of 
$$R := \End^0(A) := \QQ \otimes_\ZZ \End(A)$$ 
the $\QQ$-endomorphism algebra of $A$.  We then turn to applications and examples.

The key point to what we do here is Theorem \ref{theorem4.1} and a special case of this result applies to principally polarized abelian varieties (ppav).  To describe it,  suppose that 
\begin{equation}\label{eqn1.1}
A=A_1^{r_1} \times \dots \times A_k^{r_k}
\end{equation}
with each of the $A_i$ simple and pairwise nonisogenous abelian varieties.  Assume further that $A$ admits an ample divisor $\lambda$ which defines a principal polarization of $A$. Let $r_\lambda : R \rightarrow R$ denote the Rosati involution determined by $\lambda$.  Since $\lambda$ is a principal polarization, it determines an isomorphism 
\begin{equation}\label{eqn1.2}
\Phi_\lambda : \NS(A) \xrightarrow{\sim} \End_\lambda(A)
\end{equation}
between the N\'{e}ron-Severi group of $A$ and the subgroup 
$$
\End_\lambda(A) := \{ \alpha \in \End(A) : r_\lambda (\alpha) = \alpha \} 
$$
of $\End(A)$ fixed by $r_\lambda$.

The decomposition \eqref{eqn1.1} allows us to write 
$$
R = R_1\times \dots \times R_k
$$
with $R_i \simeq M_{r_i}(\Delta_i)$, for finite dimensional division rings $\Delta_i$ over $\QQ$.  In what follows we let $Z_i$ denote the centre of $\Delta_i$, $m_i^2 := \dim_{Z_i} \Delta_i$, $t_i :=[Z_i:\QQ]$, $g_i := \dim A_i$ and $g := \dim A$.  We also let $\Nrd_{R_i/\QQ}(\cdot)$ denote the reduced norm from $R_i$ to $\QQ$.
In this notation, a consequence of Corollary \ref{corollary3.7} is that the function
$$ \prod_{i=1}^k \Nrd_{R_i/\QQ}(\cdot)^{2g_i/(t_im_i)} : R \rightarrow \QQ$$
restricted to $\End^0_\lambda(A) := \QQ \otimes_\ZZ \End_\lambda(A)$ is the square of a homogeneous polynomial function of degree $g$ on $\End_\lambda^0(A)$ normalized so as to take value $1$ when evaluated at $1_R$; we denote this polynomial function by 
$$\mathrm{pNrd}_\lambda(\cdot) : \End^0_\lambda(A) \rightarrow \QQ$$ in what follows.
In this setting, a consequence of Theorem \ref{theorem4.1} is:

\begin{theorem}\label{theorem1.1}
If $D$ is a divisor on the ppav $A$ just described and $\alpha = \Phi_\lambda(D) \in \End_\lambda(A)$, the image of the class of $D$ under \eqref{eqn1.2}, then 
\begin{equation}\label{1.1} \frac{(D^g)}{g!} = \mathrm{pNrd}_\lambda(\alpha). 
\end{equation}
\end{theorem}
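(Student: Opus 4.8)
The plan is to reduce the formula to the classical Riemann--Roch theorem for abelian varieties together with a representation-theoretic computation of endomorphism degrees. First I would recall that for the line bundle $L=\Osh_A(D)$ one has $\chi(L)=(D^g)/g!$ and $\chi(L)^2=\deg\phi_L$, where $\phi_L:A\to\hat A$ is the polarization homomorphism $x\mapsto t_x^*L\otimes L^{-1}$. Since $\lambda$ is a principal polarization, $\phi_\lambda$ is an isomorphism, and by the construction of \eqref{eqn1.2} the element $\alpha=\Phi_\lambda(D)$ is exactly $\phi_\lambda^{-1}\circ\phi_L$; hence $\phi_L=\phi_\lambda\circ\alpha$ and $\deg\phi_L=\deg\phi_\lambda\cdot\deg\alpha=\deg\alpha$. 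Combining these gives
\[
\left(\frac{(D^g)}{g!}\right)^2=\deg\alpha,
\]
so the whole problem becomes the identification of $\deg\alpha$ with the reduced-norm product.

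The heart of the argument is therefore to prove that
\[
\deg\alpha=\prod_{i=1}^k\Nrd_{R_i/\QQ}(\alpha)^{2g_i/(t_im_i)}
\]
for every $\alpha\in R=\End^0(A)$. Here I would use that $\deg\alpha$ equals $\det_\QQ(\alpha\mid V)$ for the rational homology $V=\mathrm{H}_1(A,\QQ)$, a faithful $R$-module of $\QQ$-dimension $2g$. The decomposition \eqref{eqn1.1} splits $V=\bigoplus_i V_i$ with $V_i=\mathrm{H}_1(A_i^{r_i},\QQ)$ a module over $R_i=M_{r_i}(\Delta_i)$, and $\det_\QQ(\alpha\mid V)=\prod_i\det_\QQ(\alpha\mid V_i)$. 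Writing $V_i$ as a direct sum of copies of the simple $R_i$-module $S_i=\Delta_i^{r_i}$, a $\QQ$-dimension count gives multiplicity $2g_i/(m_i^2t_i)$. Passing to a splitting field of $Z_i$ identifies $\det_\QQ(\alpha\mid S_i)$ with $\Nrd_{R_i/\QQ}(\alpha)^{m_i}$, the factor $m_i$ being the number of standard summands of $S_i$ after base change (equivalently the reduced degree of $\Delta_i$ over $Z_i$), and the norm $N_{Z_i/\QQ}$ hidden inside $\Nrd_{R_i/\QQ}$ accounting for the descent from $Z_i$ to $\QQ$. Assembling the two exponents, $m_i\cdot 2g_i/(m_i^2t_i)=2g_i/(t_im_i)$, yields the claimed formula. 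I expect this base-change bookkeeping---correctly matching the multiplicity $2g_i/(m_i^2t_i)$, the reduced-degree factor $m_i$, and the norm $N_{Z_i/\QQ}$---to be the main obstacle.

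Finally I would combine the two displays. By the fact recorded just before the statement (a consequence of Corollary \ref{corollary3.7}), the right-hand product restricted to $\End^0_\lambda(A)$ equals $\mathrm{pNrd}_\lambda(\cdot)^2$. Thus $((D^g)/g!)^2=\mathrm{pNrd}_\lambda(\alpha)^2$ as homogeneous degree-$g$ polynomial functions of $\alpha$ on the $\QQ$-vector space $\End^0_\lambda(A)$, and since the polynomial ring over $\QQ$ is an integral domain this forces $(D^g)/g!=\pm\,\mathrm{pNrd}_\lambda(\alpha)$ with a single global sign. To pin the sign down I would evaluate at $\alpha=1_R$, that is, at $D=\lambda$: the left side is $\chi(\Osh_A(\lambda))=1$ because $\lambda$ is a principal polarization, while the right side is $\mathrm{pNrd}_\lambda(1_R)=1$ by the chosen normalization. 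Both values being $+1$, the sign is $+$, which establishes \eqref{1.1}.
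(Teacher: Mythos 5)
Your skeleton is the paper's: Riemann--Roch giving $\chi(\Osh_A(D)) = (D^g)/g!$ and $\chi^2 = \deg \phi_D$, the reduction $\deg\phi_D = \deg\alpha$ via $\phi_D = \phi_\lambda \circ \alpha$ and $\deg\phi_\lambda = 1$ (this is Lemma \ref{lemma3.1} and \eqref{3.20}), the square-root argument using that $\QQ[x_1,\dots,x_n]$ is a domain, and the normalization at $D = \lambda$, $\Phi_\lambda(\lambda) = 1_R$ (this is exactly how Theorem \ref{theorem4.1} pins down the constant, and Corollary \ref{corollary4.2} follows). The genuine gap is in your proof of the central identity $\deg\alpha = \prod_i \Nrd_{R_i/\QQ}(\alpha_i)^{2g_i/(t_i m_i)}$. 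The paper works over an arbitrary field $\kk$, and its motivating examples in \S \ref{7} include supersingular elliptic curves in characteristic $p$; your argument computes $\deg\alpha$ as $\det_\QQ\bigl(\alpha \mid \H_1(A,\QQ)\bigr)$, and no such $\QQ$-homology exists when $\cchar \kk > 0$. The natural substitute, the $\ell$-adic Tate module $V_\ell$, does satisfy $\deg\alpha = \det(\alpha \mid V_\ell)$, but then your module bookkeeping breaks: $\Delta_i \otimes_\QQ \QQ_\ell$ need not be a division algebra, and the multiplicity $2g_i/(t_i m_i^2)$ you assign need not be an integer. Concretely, for a supersingular elliptic curve one has $g_i = r_i = t_i = 1$ and $m_i = 2$, so your multiplicity is $1/2$: the $2$-dimensional homology cannot be a direct sum of copies of $S_i = \Delta_i$, which has $\QQ$-dimension $4$. (Over $\QQ_\ell$, $\ell \neq p$, the quaternion algebra splits and the simple module has dimension $2$, which is why the degree formula survives; but making this uniform over all $\ell$, including $\ell = p$, needs the rationality and $\ell$-independence of characteristic polynomials, an input you have not supplied.) Note also that the integrality of the exponent, which the paper proves as Proposition \ref{theorem3.5}(a), is in your route a hypothesis silently built into the decomposition $V_i \simeq S_i^{\oplus 2g_i/(t_i m_i^2)}$.

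The paper avoids all of this with Mumford's norm-form lemma \cite[\S 19, Lem.~on p.~179]{Mum:v1}: $\deg(\cdot)$ is a norm form of degree $2g_i r_i$ on the simple algebra $R_i$, every norm form on $R_i$ is a positive integer power of $\Nrd_{R_i/\QQ}(\cdot)$, and comparing degrees forces the exponent $n_i = 2g_i r_i/(t_i d_i) = 2g_i/(t_i m_i)$ --- an argument that is characteristic-free and delivers the integrality statement as a byproduct. If you restrict to $\cchar \kk = 0$ (reducing to $\CC$, since degrees and intersection numbers are insensitive to such base change), then your computation is correct: the multiplicity count, the identity $\det_\QQ(\alpha \mid S_i) = \Nrd_{R_i/\QQ}(\alpha)^{m_i}$, and the assembly $m_i \cdot 2g_i/(t_i m_i^2) = 2g_i/(t_i m_i)$ all check out, and your concluding step coincides with the paper's. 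As written, though, the proposal proves the theorem only in characteristic zero; to get the stated generality, either invoke Mumford's lemma as the paper does, or redo the middle step $\ell$-adically with the missing input supplied.
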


We prove Theorem \ref{theorem1.1} in \S \ref{4}.  To place this theorem and the more general Theorem \ref{theorem4.1} into their proper context we should first emphasize that these theorems are due to Mumford, \cite[\S 21, p.~209]{Mum:v1}.  Our contribution here is to make them slightly more explicit and then to indicate some consequences, for example Theorems \ref{theorem4.4} and \ref{theorem5.1}.  A related question, which we address to some extent in \S \ref{5}, \S \ref{6}, and \S \ref{7}, is to understand the righthand side of \eqref{1.1} explicitly for a given $A$ and principal polarization $\lambda$.  To place these matters into perspective we recall that, especially when $\cchar \kk > 0$, it is already an interesting and subtle problem to understand those semisimple $\QQ$-algebras that are actually realized as $\End^0(A)$ for some abelian variety $A$; see \cite{Shimura}, \cite{Oort:1988} and \cite[Cor.~1.2]{Oort:VanDerPut:1988} for example, for more details and very complete results in this direction.

\subsection*{Acknowledgements}
This work benefited from correspondence with Ernst Kani and conversations with Colin Ingalls.  It was  completed while I was a postdoctoral fellow at the University of New Brunswick where I was financially supported by an AARMS postdoctoral fellowship.  Finally, I thank an anonymous referee for  carefully reading this work and for providing comments and suggestions which helped to improve the exposition and strengthen the results.

\section{Preliminaries}

We fix some notation and recall some prerequisites which are important in what follows.  We refer to \cite{Mum:v1}, \cite{CRI}, \cite{CRII}, and \cite{Reiner:2003}, for example, for more details.

\np\label{Sec2.1}
If $R$ is a simple finite dimensional $\QQ$-algebra with centre $Z$, then $R\simeq M_{r}(\Delta)$ for $\Delta$ a finite dimensional division ring over $\QQ$ and we let $m$ denote the \emph{Schur index} of $\Delta$.  Then $m^2 := \dim_{Z(\Delta)} \Delta$ for $Z(\Delta) \simeq Z$ the centre of $\Delta$, \cite[Cor.~7.22]{CRI} and \cite[p.~732]{CRII}.  Put $t := [Z:\QQ]$.  We then have that $\dim_Z R = r^2 m^2 =: d^2$ and also that $\dim_\QQ R = td^2$. 

\np\label{2.1'} We recall the definition of the reduced norm and reduced trace from a finite dimensional simple $\QQ$-algebra $R \simeq M_r(\Delta)$ to $\QQ$.  Our approach is similar to that of \cite[\S 9.a and \S 9.b]{Reiner:2003}.  To begin with, let $Z$ be the centre of $\Delta$ and choose an extension field $F/Z$ which \emph{splits} $R$.  More precisely, choose an extension field $F/Z$ for which there is an isomorphism
$F\otimes_Z R \xrightarrow{\sim} M_{rm}(F)$
of $F$-algebras.  For example, if $F$ is a maximal subfield of $\Delta$, then $F$ is a splitting field for $R$, \cite[Cor.~7.21, p.~155]{CRI}.
Next if $\alpha \in R$, then let $\Nrd_{R/Z}(\alpha)$ and $\Trd_{R/Z}(\alpha)$ denote, respectively, the determinant and trace of the image of $1\otimes\alpha$ under this isomorphism.  Recall that $\Nrd_{R/Z}(\alpha)$ and $\Trd_{R/Z}(\alpha)$ are well-defined elements of $Z$, \cite[Cor.~7.23, Thm.~ 9.3]{Reiner:2003}.  We say that the functions $\Nrd_{R/Z}(\cdot)$ and $\Trd_{R/Z}(\cdot)$ are, respectively, the \emph{reduced norm} and \emph{reduced trace} from $R$ to $Z$.
Finally, to define the reduced norm and reduced trace from $R$ to $\QQ$, we let $\Nr_{Z/\QQ}(\cdot)$ and $\Tr_{Z/\QQ}(\cdot)$ denote, respectively, the norm and trace from $Z$ to $\QQ$.  Concretely, if $\sigma_1,\dots ,\sigma_t$ are the distinct embeddings of $Z$ into $\CC$ and $\alpha \in Z$, then 
$\Nr_{Z/\QQ}(\alpha) := \prod_{i=1}^t \sigma_i(\alpha)$ 
and 
$\Tr_{Z/\QQ}(\alpha) := \sum_{i=1}^t \sigma_i(\alpha)$.  The \emph{reduced norm} and \emph{reduced trace} from $R$ to $\QQ$ are denoted, respectively, by $\Nrd_{R/\QQ}(\cdot)$ and $\Trd_{R/\QQ}(\cdot)$.  They are defined by composition: 
$\Nrd_{R/\QQ}(\cdot) := \Nr_{Z/\QQ}\circ \Nrd_{R/Z}(\cdot)$ 
and 
$\Trd_{R/\QQ}(\cdot) := \Tr_{Z/\QQ} \circ \Trd_{R/Z}(\cdot)$.

\np\label{Sec2.2} If $R$ is a finite dimensional semisimple $\QQ$-algebra with Wedderburn decomposition $R = R_1 \times \dots \times R_k$, for matrix algebras $R_i \simeq M_{r_i}(\Delta_i)$ over finite dimensional division rings $\Delta_i$ with centres $Z_i$, then we let $m_i$ denote the Schur index of $\Delta_i$, $t_i := [Z_i : \QQ]$, and $d_i := m_i r_i$.  As in \cite[Defn.~ 3.23]{CRI}, we refer to the simple $\QQ$-algebras $R_i$ as the \emph{Wedderburn components} of $R$; they are uniquely determined by $R$.  The \emph{reduced norm} and \emph{reduced trace}, respectively, from $R$ to $\QQ$ are defined as: 
$\Nrd_{R/\QQ}(\cdot) := \prod_{i=1}^k \Nrd_{R_i / \QQ}(\cdot)$ and 
$\Trd_{R/\QQ}(\cdot) := \sum_{i=1}^k \Trd_{R_i/\QQ}(\cdot)$, \cite[p.~121]{Reiner:2003}.  

\np\label{Sec2.2.1} Let $R$ be a finite dimensional semisimple $\QQ$-algebra.  By a \emph{positive involution} on $R$, we mean a $\QQ$-linear map $':R\rightarrow R$ which satisfies the conditions that $(\alpha \beta)' = \beta' \alpha'$ for all $\alpha,\beta \in R$, $(\alpha')' = \alpha$ for all $\alpha \in R$ and $\Trd_{R/\QQ}(\alpha \alpha') > 0$ for all nonzero $\alpha \in R$, \cite[\S 21, p.~193]{Mum:v1}.   By a \emph{norm form} on $R$ over $\QQ$, we mean a non-zero polynomial function $\N(\cdot) : R \rightarrow \QQ$ such that $\N(\alpha \beta) = \N(\alpha)\N(\beta)$ for each $\alpha,\beta \in R$, \cite[\S 19, p.~178]{Mum:v1}.

\np\label{Sec2.3} We consider abelian varieties defined over a field $\kk$.  Recall that a non-zero abelian variety is said to be \emph{simple} if it contains no proper non-zero abelian subvarieties.  In what follows, all abelian varieties will be tacitly assumed to be non-zero.  If $A$ is an abelian variety, then we let $\hat{A}$ denote its dual and we let $R := \End^0(A) := \QQ\otimes_\ZZ \End(A)$ denote the $\QQ$-endomorphism algebra of $A$.  Recall that $R$ is a finite dimensional semisimple $\QQ$-algebra and, further, that $R$ is a division algebra when $A$ is simple, \cite[\S 19, p.~174]{Mum:v1},  \cite[Cor.~3.20]{Conrad:2006}, 
\cite[Thm.~1.2.1.3]{Chai:Conrad:Oort:2014}.  If $a \in A(\kk)$, then $\tau_a : A \rightarrow A$ denotes translation by $a$ in the group law.  Every divisor $D$ on $A$ determines a homomorphism $\phi_D : A \rightarrow \hat{A}$ defined by $a \mapsto \tau_a^* \Osh_A(D) \otimes \Osh_A(-D)$, \cite[\S 13, Cor.~5, p.~132]{Mum:v1}.  If $\phi \in \End^0(A)$, then we let $\hat{\phi} \in\End^0(\hat{A})$ denote its dual, \cite[\S 8, Rmks.,~p.~80]{Mum:v1}.  If $\lambda$ is an ample divisor on $A$, then we let $r_\lambda : R\rightarrow R$ denote the Rosati-involution.  Recall that $r_\lambda$ is defined by $\alpha \mapsto \phi_\lambda^{-1} \circ \hat{\alpha} \circ \phi_\lambda$ and that $r_\lambda$ is a positive involution, \cite[\S 21, Thm.~1, p.~192]{Mum:v1}, \cite[Lem.~1.3.5.3]{Chai:Conrad:Oort:2014}.  Here $\phi_\lambda^{-1}$ is the inverse of $\phi_\lambda$ in the category of abelian varieties up to isogeny.  If $n \in \ZZ$, then we let $[n]_A : A \rightarrow A$ denote multiplication by $n$ in the group law.

\np\label{Sec2.4} Let $A$ be an abelian variety of dimension $g$. We denote the $g$-fold self-intersection number of a divisor $D$ on $A$ by $(D^g)$.  Let $\NS(A)$ be the N\'{e}ron-Severi group of divisors on $A$ modulo algebraic equivalence and $\NS^0(A) := \QQ \otimes_\ZZ \NS(A)$ the N\'{e}ron-Severi space of $A$.  Note that the map $D \mapsto \phi_D$ induces an inclusion $\NS(A) \hookrightarrow \Hom(A,\hat{A})$.  In particular, $\NS(A)$ is free, \cite[\S 19, Cor.~2, p.~178]{Mum:v1}, \cite[Cor.~12.8]{Milne:1986}, and the natural map $\NS(A) \rightarrow \NS^0(A)$ is injective.  By abuse of terminology, we refer to elements of $\NS^0(A)$ as $\QQ$-divisors and elements of $\NS(A)$ as integral divisors.  By abuse of notation, we denote the class of a divisor $D$ on $A$ in $\NS(A)$ also by $D$.  

\np\label{Sec2.4.1} Every ample divisor $\lambda$ on an abelian variety $A$ determines a $\QQ$-linear embedding 
$$\Phi_\lambda : \NS^0(A) \hookrightarrow \End^0(A)$$ 
defined by 
$D \mapsto \phi_\lambda^{-1} \circ \phi_D.$   
As in \cite[\S 20, Cor.~, p.~191]{Mum:v1}, see also \cite[Chap.~XII]{MV}, we check, using the theory of Riemann-forms, that the image of $\Phi_\lambda$ is the $\QQ$-vector space 
$$
\End^0_\lambda(A) := \{ \alpha \in \End^0(A) : r_\lambda(\alpha) = \alpha \}
$$ 
of elements of $\End^0(A)$ which are fixed by $r_\lambda$.  Note that if $\lambda$ is a \emph{principal polarization}, that is if $(\lambda^g) = g!$, then $\Phi_\lambda$ determines a $\ZZ$-linear isomorphism $\Phi_\lambda : \NS(A) \xrightarrow{\sim} \End_\lambda(A)$. Here $\End_\lambda(A) := \{ \alpha \in \End(A) : r_\lambda(\alpha) = \alpha \}$.    When $\lambda$ defines a principal polarization, we say that the pair $(A,\lambda)$ is a \emph{principally polarized abelian variety}.

\section{Isogenies and the degree function}\label{3}

In this section we record some facts about isogenies between abelian varieties and the degree function.  The main point is to establish Proposition \ref{theorem3.5} and its corollaries which we need in \S \ref{4}.

\np\label{} Let $\alpha : B_1 \rightarrow B_2$ be a homomorphism of abelian varieties. Recall, \cite[p.~114]{Milne:1986} or \cite[\S 6, p.~63]{Mum:v1}, that $\alpha$ is an \emph{isogeny} if it is surjective and has kernel a finite group scheme.  If $\alpha$ is an isogeny, then we define its \emph{degree}, which we denote by $\deg (\alpha)$, to be the order of its kernel (as a finite group scheme).  Precisely, $\deg(\alpha)$ equals the rank of $\alpha_* \Osh_{B_1}$ as a locally free $\Osh_{B_2}$-module.  This concept can be extended to all homomorphisms from $B_1$ to $B_2$.  In particular, we have the degree function
\begin{equation}\label{3.1}
\deg(\cdot) : \Hom(B_1,B_2) \rightarrow \ZZ
\end{equation}
defined by
\begin{equation}\label{3.2}
\deg (\alpha) := \begin{cases}
\deg (\alpha) & \text{ if $\alpha$ is an isogeny} \\
0 & \text{ otherwise.}
\end{cases}
\end{equation}
Such functions also have the properties that
\begin{equation}\label{3.3}
\deg(\beta \circ \alpha) = \deg(\beta) \deg(\alpha)  
\end{equation} for all triples of  abelian varieties, $B_1$, $B_2$, and $B_3$, and homomorphisms  
$\alpha \in \Hom(B_1,B_2)$ and 
$\beta \in \Hom(B_2,B_3),$ \cite[p.~115]{Milne:1986}.

\np\label{extended:degree:properties}
When $A = B_1 = B_2$, the function 
$\deg(\cdot) : \End(A) \rightarrow \ZZ$ extends to a degree function 
$\deg(\cdot) : \End^0(A) \rightarrow \QQ $.  Further, if $g = \dim A$, then this function 
is a homogeneous polynomial function of degree $2g$, \cite[\S19, Thm.~2, p.~174]{Mum:v1} or  \cite[Prop.~12.4, p.~123]{Milne:1986}.
The degree function \eqref{3.1} also extends to give a degree function
\begin{equation}\label{3.4}
\deg(\cdot): \Hom^0(B_1,B_2) := \QQ \otimes_{\ZZ} \Hom(B_1,B_2)\rightarrow \QQ.
\end{equation}
Indeed, given $\alpha \in \Hom(B_1,B_2)$ and $n \in \ZZ$, the morphism $n\alpha \in \Hom(B_1,B_2)$ is defined by $x \mapsto n \alpha(x)$.  In particular,
$ n \alpha = [n]_{B_2} \circ \alpha$
and so
$$ \deg(n \alpha) = \deg([n]_{B_2} \circ \alpha) = \deg([n]_{B_2}) \deg(\alpha).$$
Now, given $\alpha \in \Hom^0(B_1,B_2)$, let $n$ be an integer so that $n\alpha$ is an element of $\Hom(B_1,B_2)$ and define
\begin{equation}\label{3.5}
 \deg (\alpha) := \frac{\deg(n \alpha)}{\deg ([n]_{B_2})}.
 \end{equation}
The definition \eqref{3.5} is well defined and extends that given in \eqref{3.2}.  The extended degree functions \eqref{3.4} also behave well with respect to composition of morphisms as in \eqref{3.3}.

\np\label{} In what follows, we fix an ample divisor $\lambda$ on an abelian variety $A$.  We record the following for later use.

\begin{lemma}\label{lemma3.1}
If $D$ is a divisor on $A$, we then have that
\begin{equation}\label{3.6} \deg(\phi_D) = \frac{\deg(\Phi_{\lambda}(D))}{\deg(\phi_{\lambda}^{-1})}=\deg(\phi_\lambda)\deg(\Phi_{\lambda}(D)).
\end{equation}
\end{lemma}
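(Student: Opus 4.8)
The plan is to unwind the definition of $\Phi_\lambda$ from \ref{Sec2.4.1} and then invoke the multiplicativity of the extended degree function recorded in \eqref{3.3}. By definition $\Phi_\lambda(D) = \phi_\lambda^{-1}\circ \phi_D$, where $\phi_D \colon A \to \hat A$, $\phi_\lambda \colon A \to \hat A$, and $\phi_\lambda^{-1}\colon \hat A \to A$ are all morphisms between abelian varieties of the common dimension $g$; thus the extended degree function \eqref{3.4} applies to each of them and their composites lie in $\End^0(A)$.

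First I would apply multiplicativity to the composite $\phi_\lambda^{-1}\circ \phi_D$, obtaining
\[\deg(\Phi_\lambda(D)) = \deg(\phi_\lambda^{-1}\circ \phi_D) = \deg(\phi_\lambda^{-1})\,\deg(\phi_D).\]
Solving for $\deg(\phi_D)$ yields the first claimed equality in \eqref{3.6}. For the second equality I would compute $\deg(\phi_\lambda^{-1})\,\deg(\phi_\lambda)$. Since $\phi_\lambda^{-1}$ is by construction the inverse of $\phi_\lambda$ in the category of abelian varieties up to isogeny, we have $\phi_\lambda^{-1}\circ \phi_\lambda = \mathrm{id}_A$, and the identity, being an isomorphism, has degree $1$. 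Multiplicativity then gives $\deg(\phi_\lambda^{-1})\,\deg(\phi_\lambda) = \deg(\mathrm{id}_A) = 1$, so $1/\deg(\phi_\lambda^{-1}) = \deg(\phi_\lambda)$; substituting this into the first equality produces the second.

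There is no genuine obstacle here: the content of the lemma is carried entirely by the multiplicativity of the degree, and the only point requiring a little care is that \eqref{3.3} has indeed been extended to the isogeny category, as noted at the close of \ref{extended:degree:properties}, so that it may be legitimately applied to $\phi_\lambda^{-1}$ and to the rational endomorphism $\Phi_\lambda(D)$ rather than only to honest isogenies of integral homomorphisms. Once that is in hand, the two displayed identities in \eqref{3.6} are immediate.
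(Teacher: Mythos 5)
Your proposal is correct and follows essentially the same route as the paper's own proof: both unwind $\Phi_\lambda(D) = \phi_\lambda^{-1}\circ\phi_D$ and apply the multiplicativity \eqref{3.3} of the extended degree function to get $\deg(\Phi_\lambda(D)) = \deg(\phi_\lambda^{-1})\deg(\phi_D)$. You merely make explicit the step $\deg(\phi_\lambda^{-1})\deg(\phi_\lambda) = \deg(\operatorname{id}_A) = 1$, which the paper leaves implicit.
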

\begin{proof}
Since $\Phi_\lambda(D) = \phi^{-1}_\lambda \circ \phi_D$, in light of the formula \eqref{3.3}, it suffices to note that 
$$ \deg(\Phi_{\lambda}(D)) = \deg(\phi_\lambda^{-1})\deg(\phi_D).$$
\end{proof}

\np\label{S:2.4}  Every isogeny $\beta : B \rightarrow A$ induces a ring isomorphism 
\begin{equation}\label{3.7} [\beta] : \End^0(B) \xrightarrow{\sim} \End^0(A)
\end{equation}
defined by 
$$
[\beta](\alpha) = \beta \circ \alpha \circ \beta^{-1}.
$$
Here $\beta^{-1}$ denotes the inverse of $\beta$ in the category of abelian varieties up to isogeny.  Specifically, if $n = \deg \beta$ and $\gamma$ is the unique isogeny $A \rightarrow B$ so that $\gamma \circ \beta = [n]_B$ and $\beta \circ \gamma = [n]_A$, then $\beta^{-1}$ is the quasi-isogeny defined by $\beta^{-1} = (1/n) \gamma$, \cite[\S19, p.~172]{Mum:v1}.  The homomorphism \eqref{3.7} behaves well with respect to the degree homomorphism.  In particular
\begin{equation}\label{3.9}
\deg([\beta](\alpha)) = \deg(\beta)\deg(\alpha) \deg(\beta^{-1}) = \deg(\alpha),
\end{equation}
for all $\alpha \in \End^0(B)$.

If we fix an ample divisor $\lambda$ on $A$, then functorial properties of the isomorphism $\Phi_\lambda$ with respect to $\beta$ and the inverse of \eqref{3.7} are explained in \cite[p.~50]{Kani:2016}.  Here we record the following related remark which we will need in what follows.

\begin{proposition}\label{prop3.2}
Let $\beta : B \rightarrow A$ denote an isogeny of abelian varieties, fix an ample divisor $\lambda$ on $A$ and let  
$$r_{\beta^* \lambda} : \End^0(B) \rightarrow \End^0(B)$$ 
be the Rosati involution determined by the ample divisor $\beta^*\lambda$ on $B$.  Then if $\alpha \in \End^0(B)$, it follows that 
\begin{equation}\label{2.10}
[\beta](r_{\beta^* \lambda} \alpha) = r_\lambda([\beta](\alpha)).
\end{equation}
\end{proposition}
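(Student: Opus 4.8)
The plan is to reduce the claimed identity to a direct computation in the isogeny category, using the defining formula for the Rosati involution together with the functoriality of the polarization homomorphism under pullback. Recall from \ref{Sec2.3} that $r_\lambda(\psi) = \phi_\lambda^{-1} \circ \hat{\psi} \circ \phi_\lambda$ for $\psi \in \End^0(A)$, and similarly $r_{\beta^*\lambda}(\alpha) = \phi_{\beta^*\lambda}^{-1} \circ \hat{\alpha} \circ \phi_{\beta^*\lambda}$ for $\alpha \in \End^0(B)$. The single ingredient that makes everything fit together is the standard compatibility
\[
\phi_{\beta^*\lambda} = \hat{\beta} \circ \phi_\lambda \circ \beta
\]
(see, e.g., \cite{Mum:v1}), which records how $\phi_{(\cdot)}$ transforms under $\beta^*$; since $\beta$ is an isogeny this is an equality of isogenies $B \to \hat{B}$, and inverting it in the category of abelian varieties up to isogeny gives $\phi_{\beta^*\lambda}^{-1} = \beta^{-1} \circ \phi_\lambda^{-1} \circ \hat{\beta}^{-1}$.

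First I would expand the left-hand side. Substituting the formula for $r_{\beta^*\lambda}(\alpha)$ and then for $\phi_{\beta^*\lambda}$ and its inverse, the outer factors $\beta$ and $\beta^{-1}$ coming from $[\beta]$ cancel the innermost $\beta^{-1}$ and $\beta$, leaving
\[
[\beta](r_{\beta^* \lambda} \alpha) = \phi_\lambda^{-1} \circ \hat{\beta}^{-1} \circ \hat{\alpha} \circ \hat{\beta} \circ \phi_\lambda.
\]
Next I would expand the right-hand side. Here the key facts are that dualization is contravariant, so that $\widehat{[\beta](\alpha)} = \widehat{\beta\circ\alpha\circ\beta^{-1}} = \hat{\beta}^{-1}\circ\hat{\alpha}\circ\hat{\beta}$, using in addition that the dual of the quasi-isogeny $\beta^{-1}$ is $\hat{\beta}^{-1}$. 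Feeding this into $r_\lambda([\beta](\alpha)) = \phi_\lambda^{-1}\circ\widehat{[\beta](\alpha)}\circ\phi_\lambda$ produces exactly the same expression, and the two sides agree.

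The computation is entirely formal once the pullback formula is in hand, so the only real points requiring care are bookkeeping ones: verifying that each inverse is taken in the category of abelian varieties up to isogeny, where $\beta$, $\phi_\lambda$, and $\phi_{\beta^*\lambda}$ are all invertible because $\beta$ is an isogeny and $\lambda$, $\beta^*\lambda$ are ample; and checking the two compatibilities $\widehat{\beta^{-1}} = \hat{\beta}^{-1}$ and $\widehat{f\circ g} = \hat{g}\circ\hat{f}$ that let the dual pass through a composite and through an inverse. I expect the main obstacle, such as it is, to be confirming the pullback identity $\phi_{\beta^*\lambda} = \hat{\beta}\circ\phi_\lambda\circ\beta$ in the generality needed here, over an arbitrary base field $\kk$ and for divisors rather than line bundles, rather than anything in the algebraic manipulation itself.
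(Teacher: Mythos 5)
Your proposal is correct and follows essentially the same route as the paper: both expand the two sides via the defining formula for the Rosati involution and reduce the identity to the pullback compatibility $\phi_{\beta^*\lambda} = \hat{\beta}\circ\phi_\lambda\circ\beta$, inverted in the isogeny category. The only difference is that you cite this compatibility to Mumford and flag its verification as the point requiring care, whereas the paper verifies it on the spot by checking the line-bundle identity $\beta^*(\tau_{\beta(b)}^*\Osh_A(\lambda)\otimes\Osh_A(-\lambda)) \simeq \tau_b^*\beta^*\Osh_A(\lambda)\otimes\beta^*\Osh_A(-\lambda)$ for all $b \in B$, using $\beta\circ\tau_b = \tau_{\beta(b)}\circ\beta$.
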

\begin{proof}
We compute 
$$[\beta](r_{\beta^* \lambda} \alpha) = \beta \circ \phi_{\beta^*\lambda}^{-1} \circ \hat{\alpha} \circ \phi_{\beta^* \lambda} \circ \beta^{-1}$$ whereas 
$$r_\lambda([\beta](\alpha)) = 
\phi^{-1}_\lambda \circ \widehat{\beta^{-1}} \circ \widehat{\alpha} \circ \widehat{\beta} \circ \phi_\lambda.$$  The conclusion of the proposition will follow if we can show that 
\begin{equation}\label{2.11}
\hat{\beta} \circ \phi_\lambda = \phi_{\beta^* \lambda} \circ \beta^{-1}.
\end{equation}
To establish \eqref{2.11}, it suffices to show that
$$
\hat{\beta} \circ \phi_\lambda \circ \beta = \phi_{\beta^* \lambda};
$$
equivalently that
\begin{equation}\label{2.12'}
\beta^*(\tau_{\beta(b)}^* \Osh_A(\lambda) \otimes \Osh_A(-\lambda)) \simeq \tau_b^* \beta^* \Osh_A(\lambda)\otimes \beta^* \Osh_A(-\lambda),
\end{equation}
for all $b \in B$.
That \eqref{2.12'} holds can be checked using the fact that $\beta \circ \tau_b = \tau_{\beta(b)} \circ \beta$ for all $b \in B$.
\end{proof}

Proposition \ref{prop3.2} has the following consequence.

\begin{corollary}\label{cor3.3}
If $\beta : B \rightarrow A$ is an isogeny and $\lambda$ an ample divisor on $A$, then the morphism $[\beta]$ induces the following collection of linear maps
\begin{equation}\label{3.13}
\NS(B) \hookrightarrow \NS^0(B) \xrightarrow{\sim} \End^0_{\beta^* \lambda}(B) \xrightarrow{\sim} \End^0_\lambda(A).
\end{equation}
\end{corollary}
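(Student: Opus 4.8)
The plan is to identify each of the three arrows in \eqref{3.13} and verify its claimed properties one at a time. The leftmost arrow $\NS(B) \hookrightarrow \NS^0(B)$ is nothing but the natural injection recorded in \ref{Sec2.4}, so there is nothing to prove there. For the middle arrow I would first observe that, since $\beta$ is an isogeny and hence a finite surjective morphism, the pullback $\beta^* \lambda$ of the ample divisor $\lambda$ is again ample on $B$; this is precisely what guarantees, via \ref{Sec2.4.1}, that $\Phi_{\beta^* \lambda}$ is defined and furnishes the isomorphism $\NS^0(B) \xrightarrow{\sim} \End^0_{\beta^* \lambda}(B)$.

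The substance of the corollary lies in the rightmost arrow, where the claim is that the ring isomorphism $[\beta]$ of \eqref{3.7} restricts to an isomorphism of $\QQ$-vector spaces $\End^0_{\beta^*\lambda}(B) \xrightarrow{\sim} \End^0_\lambda(A)$. The key input is the intertwining relation \eqref{2.10} supplied by Proposition \ref{prop3.2}, namely $[\beta] \circ r_{\beta^*\lambda} = r_\lambda \circ [\beta]$ as maps $\End^0(B) \to \End^0(A)$. Given $\alpha \in \End^0_{\beta^*\lambda}(B)$, so that $r_{\beta^*\lambda}(\alpha) = \alpha$, applying this relation yields $r_\lambda([\beta](\alpha)) = [\beta](r_{\beta^*\lambda}\alpha) = [\beta](\alpha)$, whence $[\beta](\alpha)$ lies in the fixed subspace $\End^0_\lambda(A)$. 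Thus $[\beta]$ carries $\End^0_{\beta^*\lambda}(B)$ into $\End^0_\lambda(A)$.

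To promote this to an isomorphism, I would use that $[\beta]$ is already bijective on all of $\End^0(B)$ by \eqref{3.7}, together with the fact that the intertwining relation is symmetric: given $\gamma \in \End^0_\lambda(A)$, set $\alpha := [\beta]^{-1}(\gamma)$; then \eqref{2.10} applied to $\alpha$ gives $[\beta](r_{\beta^*\lambda}\alpha) = r_\lambda(\gamma) = \gamma = [\beta](\alpha)$, so $r_{\beta^*\lambda}(\alpha) = \alpha$ by injectivity of $[\beta]$. Hence the restriction of $[\beta]$ to $\End^0_{\beta^*\lambda}(B)$ is a bijection onto $\End^0_\lambda(A)$, and being $\QQ$-linear it is the desired isomorphism. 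I do not anticipate any serious obstacle: the only point requiring attention is the ampleness of $\beta^*\lambda$, needed merely so that the middle arrow is defined, after which Proposition \ref{prop3.2} does essentially all of the remaining work.
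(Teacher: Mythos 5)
Your proof is correct and follows the same route as the paper, whose proof simply declares the corollary an immediate consequence of \eqref{2.10} and the definitions of $\End^0_{\beta^*\lambda}(B)$ and $\End^0_\lambda(A)$. You merely spell out the routine details the paper leaves implicit---ampleness of $\beta^*\lambda$ so that $\Phi_{\beta^*\lambda}$ is defined, and the use of bijectivity of $[\beta]$ together with the intertwining relation to get surjectivity of the restricted map.
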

\begin{proof}
Immediate consequence of \eqref{2.10} and the definitions of $\End^0_{\beta^* \lambda}(B)$ and $\End^0_\lambda(A)$.
\end{proof}

For later use we also record:
\begin{lemma}\label{lemma3.4}
Let $A = A_1^{r_1} \times \dots \times A_{k}^{r_k}$ with the abelian varieties $A_i$ simple and pairwise nonisogenous.  If $\beta : B \rightarrow A$ is an isogeny, if $\alpha \in \End^0(B)$ and $[\beta](\alpha) = (\alpha_1,\dots, \alpha_k)$ with $\alpha_i \in \End^0(A_i^{r_i})$, then 
\begin{equation}\label{3.14}
\deg(\alpha) = \prod_{i=1}^k \deg(\alpha_i).
\end{equation}
\end{lemma}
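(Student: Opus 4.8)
The plan is to reduce the statement to the multiplicativity of the degree function over products, exploiting the isogeny-invariance recorded in \eqref{3.9}. First I would observe that, since $\beta$ is an isogeny, \eqref{3.9} gives $\deg(\alpha) = \deg([\beta](\alpha)) = \deg((\alpha_1,\dots,\alpha_k))$. Moreover, because the $A_i$ are simple and pairwise nonisogenous, $\Hom^0(A_i^{r_i},A_j^{r_j}) = 0$ for $i \neq j$, so that $\End^0(A)$ is indeed the product $\prod_{i=1}^k \End^0(A_i^{r_i})$ and the expression $(\alpha_1,\dots,\alpha_k)$ makes sense as stated. It therefore suffices to prove that $\deg((\alpha_1,\dots,\alpha_k)) = \prod_{i=1}^k \deg(\alpha_i)$ for $\alpha_i \in \End^0(A_i^{r_i})$.

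The main step is to establish this multiplicativity for integral endomorphisms. Given $\gamma_i \in \End(A_i^{r_i})$, the product morphism $\gamma = (\gamma_1,\dots,\gamma_k)$ on $A$ has kernel $\ker \gamma = \prod_{i=1}^k \ker \gamma_i$ as a group scheme. If each $\gamma_i$ is an isogeny, then so is $\gamma$, and the order of the kernel is the product of the orders, whence $\deg \gamma = \prod_i \deg \gamma_i$; if some $\gamma_i$ fails to be an isogeny, then $\gamma$ is not surjective and both sides vanish. Here the care required is that, especially when $\cchar \kk > 0$, the kernels may be non-reduced, so one must work with orders of finite group schemes (equivalently, with the rank of $\gamma_* \Osh_A$) rather than with numbers of geometric points; this is the point I expect to need the most attention.

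Finally I would promote this to $\End^0$ using the definition \eqref{3.5}. Choosing $n \in \ZZ$ with $n\alpha_i \in \End(A_i^{r_i})$ for every $i$, and using $[n]_A = ([n]_{A_1^{r_1}},\dots,[n]_{A_k^{r_k}})$ together with $\deg([n]_A) = n^{2g} = \prod_i \deg([n]_{A_i^{r_i}})$ (since $g = \sum_i r_i g_i$), the integral case gives
\begin{equation*}
\deg((\alpha_1,\dots,\alpha_k)) = \frac{\deg((n\alpha_1,\dots,n\alpha_k))}{\deg([n]_A)} = \prod_{i=1}^k \frac{\deg(n\alpha_i)}{\deg([n]_{A_i^{r_i}})} = \prod_{i=1}^k \deg(\alpha_i),
\end{equation*}
which completes the argument. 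The extension to $\End^0$ is routine once the integral multiplicativity is in hand.
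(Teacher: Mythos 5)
Your proposal is correct and follows essentially the same route as the paper's own proof: reduce via the isogeny-invariance \eqref{3.9} to the product $A$ itself, reduce via \eqref{3.5} to integral endomorphisms, and conclude by the case analysis in which either some component fails to be an isogeny (both sides vanish) or all components are isogenies and the orders of the kernels, as finite group schemes, multiply over the product decomposition. You merely spell out details the paper treats as clear, such as the vanishing of $\Hom^0(A_i^{r_i},A_j^{r_j})$ for $i \neq j$ and the identity $\deg([n]_A)=n^{2g}=\prod_{i=1}^k \deg([n]_{A_i^{r_i}})$.
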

\begin{proof}
Since, by \eqref{3.9}, the degree function behaves well with respect to the ring isomorphism $[\beta]$, without loss of generality we may assume that $A = B$ and that $\beta = \operatorname{id}_A$.  Also, in light of the definition \eqref{3.5}, without loss of generality we may assume that $\alpha \in \End(A)$.  If $\alpha$ is not an isogeny, then some $\alpha_i$ is also not an isogeny and the formula holds.  On the other hand, if $\alpha$ is an isogeny, then each of the $\alpha_i$ must be an isogeny and clearly 
$$ \deg (\alpha) = \prod_{i=1}^k \deg (\alpha_i).$$
\end{proof}

\np\label{Sec2.5}  We use our remarks made thus far to prove the main results of this section.  To prepare for these statements, let $A = A_1^{r_1} \times \dots \times A_k^{r_k}$ with each $A_i$ simple abelian varieties and pairwise nonisogenous.  Let $g_i := \dim A_i$ and write $R := \End^0(A) = R_1 \times \dots \times R_k$ with $\Delta_i = \End^0(A_i)$ and $R_i = M_{r_i}(\Delta_i)$; we adopt the notation of \S \ref{Sec2.2}.  

The following remark, and its corollary, should be compared with \cite[\S 19, Cor., p.~182]{Mum:v1}.

\begin{proposition}\label{theorem3.5}
In the setting of \S \ref{Sec2.5} just described, the following assertions hold true:
\begin{enumerate}
\item{$2 g_i / (t_i m_i) \in \ZZ$, for $i=1,\dots, k$;}
\item{$\deg(\alpha) = \prod_{i=1}^k \Nrd_{R_i/\QQ}(\alpha_i)^{2g_i/(t_i m_i)}$.}
\end{enumerate}
\end{proposition}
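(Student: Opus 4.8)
The plan is to reduce at once to a single factor and then to realise both $\deg(\cdot)$ and the reduced norm as determinants attached to a faithful module. By Lemma~\ref{lemma3.4} the degree on $\End^0(A)$ is the product of the degrees on the factors $\End^0(A_i^{r_i}) = R_i$, so it suffices to prove, for each fixed $i$, the single-factor identity $\deg(\alpha_i) = \Nrd_{R_i/\QQ}(\alpha_i)^{2g_i/(t_i m_i)}$, extracting the integrality of the exponent as a by-product of the same computation. The key external input is the classical fact underlying \S\ref{extended:degree:properties} (see \cite[\S19, Thm.~2]{Mum:v1}) that $\deg(\alpha_i)$ is computed as $\det_{\QQ}(\alpha_i \mid \H_1(A_i^{r_i},\QQ))$, the determinant of $\alpha_i$ acting on the $2r_ig_i$-dimensional rational homology; over an arbitrary base field one replaces $\H_1$ by the rational Tate module $V_\ell$.

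The heart of the argument is a representation-theoretic lemma that I would prove first: for any finitely generated left module $M$ over the simple $\QQ$-algebra $R_i = M_{r_i}(\Delta_i)$ one has $\det_{\QQ}(\alpha_i \mid M) = \Nrd_{R_i/\QQ}(\alpha_i)^{\nu m_i}$, where $\nu$ is the multiplicity of the unique simple $R_i$-module $S$ in $M$. Since $R_i$ is simple, $M \cong S^{\nu}$, so $\det_{\QQ}(\alpha_i \mid M) = \det_{\QQ}(\alpha_i \mid S)^{\nu}$, and it remains to evaluate $\det_{\QQ}(\alpha_i \mid S)$ by passing to a splitting field $F/Z_i$ as in \S\ref{2.1'}. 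Over $F$ the algebra $R_i \otimes_{Z_i} F$ becomes $M_{r_i m_i}(F)$ and $S \otimes_{Z_i} F$ becomes $m_i$ copies of the standard column module, whence the $Z_i$-linear determinant of $\alpha_i$ on $S$ equals $\Nrd_{R_i/Z_i}(\alpha_i)^{m_i}$; applying $\Nr_{Z_i/\QQ}$ to descend from $Z_i$-linear to $\QQ$-linear determinants then yields $\det_{\QQ}(\alpha_i \mid S) = \Nrd_{R_i/\QQ}(\alpha_i)^{m_i}$, by the very definition of $\Nrd_{R_i/\QQ}$ recalled in \S\ref{2.1'}.

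With the lemma in hand I would take $M = \H_1(A_i^{r_i},\QQ)$, so that $\dim_{\QQ} M = 2r_i g_i$ and $\dim_{\QQ} S = r_i t_i m_i^2$, forcing the multiplicity $\nu = 2g_i/(t_i m_i^2)$. Being a multiplicity, $\nu$ is automatically a positive integer, and this already gives assertion~(a), since $2g_i/(t_i m_i) = \nu m_i \in \ZZ$; combining the identification $\deg(\alpha_i) = \det_{\QQ}(\alpha_i \mid M)$ with the lemma gives $\deg(\alpha_i) = \Nrd_{R_i/\QQ}(\alpha_i)^{\nu m_i}$, and feeding this through Lemma~\ref{lemma3.4} yields assertion~(b). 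I expect the only genuine obstacle to be the passage to an arbitrary base field $\kk$: there $\H_1$ must be replaced by $V_\ell(A_i^{r_i})$, which is a module over $R_i \otimes_{\QQ} \QQ_\ell$ rather than over $R_i$, so the decomposition step must be run over the semisimple algebra $R_i \otimes_{\QQ} \QQ_\ell$ and one must check that the reduced norm is compatible with the base change $\QQ \hookrightarrow \QQ_\ell$. This is formal but slightly delicate; alternatively one can bypass the Tate module entirely by noting that $\deg$ restricted to $R_i$ is a multiplicative homogeneous polynomial, \emph{i.e.} a norm form in the sense of \S\ref{Sec2.2.1}, and invoking the fact that every norm form on a simple $\QQ$-algebra is an integer power of its reduced norm, the exponent being pinned down to $2g_i/(t_i m_i)$ by comparing the degrees of homogeneity ($2r_i g_i$ for $\deg$ against $t_i r_i m_i$ for $\Nrd_{R_i/\QQ}$).
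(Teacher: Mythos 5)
Your closing ``alternative'' is in fact the paper's entire proof: after the same reduction to a single Wedderburn component via Lemma \ref{lemma3.4}, the paper observes that $\deg(\cdot)$ restricted to $R_i$ is a norm form of homogeneity degree $2g_ir_i$, that $\Nrd_{R_i/\QQ}(\cdot)$ is a norm form of degree $t_id_i = t_ir_im_i$, and invokes Mumford's lemma \cite[\S 19, Lem.~on p.~179]{Mum:v1} that every norm form on a simple $\QQ$-algebra is a positive integer power of the reduced norm; assertion (a) is then precisely the integrality of $n_i = 2g_ir_i/(t_ir_im_i) = 2g_i/(t_im_i)$ read off from comparing homogeneity degrees, and (b) follows at once. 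So the argument you relegate to your final sentence is not an optional bypass but the intended proof, and as you state it there, it is complete.

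Your primary route --- realizing $\deg$ as a $\QQ$-linear determinant on a faithful module and counting multiplicities of the simple module --- is correct and genuinely different in characteristic zero, where $M = \H_1(A_i^{r_i},\QQ)$ is an honest $R_i$-module; it even proves the stronger divisibility $t_im_i^2 \mid 2g_i$. But that stronger statement is \emph{false} in positive characteristic, which shows the obstacle you flag is not merely ``formal but slightly delicate'': for a supersingular elliptic curve (the paper's own final example in \S\ref{7}) one has $g_i = t_i = 1$ and $m_i = 2$, so your multiplicity $\nu = 2g_i/(t_im_i^2) = 1/2$ is not an integer, even though $2g_i/(t_im_i) = 1$ and indeed $\deg(\cdot) = \Nrd_{\Delta_i/\QQ}(\cdot)$ there (compare the Type III row of the table in \S\ref{Geometric:Albert:table}, where only $t \mid g$ holds when $\cchar\kk > 0$). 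The failure mode sits exactly where you point: $V_\ell$ is a module only over $R_i\otimes_\QQ\QQ_\ell \simeq \prod_{w\mid\ell} R_{i,w}$, and the local Schur indices $m_{i,w}$ of the factors are in general strictly smaller than $m_i$ (in the supersingular example $\Delta_i\otimes_\QQ\QQ_\ell \simeq M_2(\QQ_\ell)$ for $\ell \neq p$). Running your lemma factor by factor yields $\det_{\QQ_\ell}\bigl(\alpha \mid V_\ell\bigr) = \prod_{w\mid\ell}\Nr_{Z_{i,w}/\QQ_\ell}\bigl(\Nrd_{R_{i,w}/Z_{i,w}}(\alpha)^{m_{i,w}\nu_w}\bigr)$, and you must prove the exponents $m_{i,w}\nu_w$ coincide for all $w$ before the right-hand side can be recognized as a power of $\Nrd_{R_i/\QQ}(\alpha)$; this is not base-change bookkeeping, and the natural way to force it is to use that $\deg$ is a $\QQ$-valued multiplicative polynomial function on $R_i$ --- that is, to invoke the norm-form lemma, at which point the Tate-module scaffolding is redundant. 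In sum: over $\cchar\kk = 0$ you have a valid and somewhat more informative alternative proof; in general, the comparison of norm forms is the proof, and it is what the paper does.
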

\begin{proof}
Because of the multiplicative nature of the degree function, namely \eqref{3.14}, it suffices to show that, for $i$ fixed,
\begin{enumerate}
\item[(a')]{$2g_i / (t_i m_i) \in \ZZ$}
\item[(b')]{$\deg(\alpha_i) = \Nrd_{R_i/\QQ}(\alpha_i)^{2g_i/(t_i m_i)}$.}
\end{enumerate}
To this end, we know that $\deg(\cdot) : R_i \rightarrow \QQ$ is a norm function of degree $2g_i r_i$ while $\Nrd_{R_i/\QQ}(\cdot)$ is a norm function of degree $t_i d_i$.  Thus by \cite[\S 19, Lem.~on p.~179]{Mum:v1}, we must have
\begin{equation}\label{3.15}
\deg(\cdot) = \Nrd_{R_i/\QQ}(\cdot)^{n_i}
\end{equation}
for some positive integer $n_i$.  In particular, $n_it_i d_i = 2g_i r_i$ and so 
\begin{equation}\label{3.15'}
n_i = 2g_i/(t_i m_i) \in \ZZ
\end{equation} which proves (a').  Assertion (b') follows from \eqref{3.15} and \eqref{3.15'}.
\end{proof}

If $B$ is an abelian variety then, by Poincar\'{e} reducibility, $B$ admits an isogeny $\beta : B \rightarrow A$ to an abelian variety $A$ of the form described in \S \ref{Sec2.5}, \cite[\S 19, Cor.~ 1, p.~174]{Mum:v1}; see \cite[Cor.~3.20]{Conrad:2006} or \cite[Thm.~1.2.1.3]{Chai:Conrad:Oort:2014} for the case that $\kk$ is not algebraically closed.  When we fix such an isogeny $\beta$, we can use Proposition \ref{theorem3.5} to understand the degree function as it pertains to elements of $\End^0(B)$. 

\begin{corollary}\label{corollary3.6}
If $A$ is as in \S \ref{Sec2.5} and if $\beta : B \rightarrow A$ is an isogeny, $\alpha \in \End^0(B)$ and $[\beta](\alpha) = (\alpha_1,\dots, \alpha_k) \in \End^0(A)$, then 
\begin{equation}\label{3.16}
\deg(\alpha) = \prod_{i=1}^k \Nrd_{R_i /\QQ}(\alpha_i)^{2g_i/(t_i m_i)}.
\end{equation}
\end{corollary}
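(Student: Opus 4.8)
The plan is to deduce this statement directly from Proposition \ref{theorem3.5} by transporting the computation from $B$ to $A$ along the isogeny $\beta$. The only two inputs needed are the degree-invariance of the ring isomorphism $[\beta]$, recorded in \eqref{3.9}, and part (b) of Proposition \ref{theorem3.5}, which already handles the case $B = A$.

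First I would invoke \eqref{3.9} to observe that, for the given $\alpha \in \End^0(B)$,
\begin{equation*}
\deg(\alpha) = \deg([\beta](\alpha)).
\end{equation*}
Thus the left-hand side of \eqref{3.16} is computed on the $A$-side, where the Wedderburn decomposition $R = R_1 \times \dots \times R_k$ and the associated reduced norms $\Nrd_{R_i/\QQ}(\cdot)$ are defined. By hypothesis $[\beta](\alpha)$ is the element $(\alpha_1,\dots,\alpha_k) \in \End^0(A)$, so I may apply Proposition \ref{theorem3.5}(b) to it. This yields
\begin{equation*}
\deg([\beta](\alpha)) = \prod_{i=1}^k \Nrd_{R_i/\QQ}(\alpha_i)^{2g_i/(t_i m_i)},
\end{equation*}
and chaining the two displays gives the desired identity \eqref{3.16}. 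Part (a) of Proposition \ref{theorem3.5} guarantees that each exponent $2g_i/(t_i m_i)$ is an integer, so the right-hand side is a well-defined element of $\QQ$.

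Since the corollary follows immediately once the two cited facts are in hand, there is no genuine obstacle here; the substantive work lies in Proposition \ref{theorem3.5} itself, which I am free to assume. The only point worth stressing is conceptual rather than computational: although $\alpha$ lives in $\End^0(B)$ and $B$ need not have the product form of \S \ref{Sec2.5}, the reduced norms in \eqref{3.16} are nonetheless those of the Wedderburn components of $R = \End^0(A)$, evaluated on the image $[\beta](\alpha)$. This is legitimate precisely because $[\beta]$ is a ring isomorphism carrying degrees to degrees, so the degree of $\alpha$ is intrinsically the same quantity whether read off on $B$ or on $A$, and it is on $A$ that Proposition \ref{theorem3.5} supplies the reduced-norm formula.
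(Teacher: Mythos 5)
Your proof is correct and takes essentially the same route as the paper: the paper's one-line proof combines \eqref{3.14} with Proposition \ref{theorem3.5}(b), and since the proof of \eqref{3.14} is precisely the reduction along $[\beta]$ via \eqref{3.9} that you invoke, citing \eqref{3.9} directly and then applying Proposition \ref{theorem3.5}(b) to $[\beta](\alpha) \in \End^0(A)$ is the same transport-along-the-isogeny argument with the bookkeeping done wholesale rather than factor by factor. Your closing remark—that the reduced norms are intrinsically those of $\End^0(A)$ even though $\alpha$ lives on $B$—is exactly the right point and matches the paper's intent.
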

\begin{proof}
Combine \eqref{3.14} and Proposition \ref{theorem3.5} (b).
\end{proof}

By combining Proposition \ref{theorem3.5} and Corollary \ref{corollary3.6} we obtain:

\begin{corollary}\label{corollary3.7}
Let $A$ be as in Proposition \ref{theorem3.5} and $B$ as in Corollary \ref{corollary3.6}.  Fix an ample divisor $\lambda$ on $A$.  The functions
\begin{equation}\label{3.17}
\prod_{i=1}^k \Nrd_{R_i/\QQ}(\cdot)^{2g_i/(t_im_i)}|_{\End^0_\lambda(A)} : \End^0_\lambda(A) \rightarrow \QQ 
\end{equation}
and
\begin{equation}\label{3.18}
\prod_{i=1}^k \Nrd_{R_i/\QQ}(\cdot)^{2g_i/(t_im_i)}|_{\End^0_{\beta^* \lambda}(B)} : \End^0_{\beta^*\lambda}(B) \rightarrow \QQ 
\end{equation}
are both squares of rational valued homogeneous polynomial functions of degree $g$ on $\End^0_\lambda(A)$ and $\End^0_{\beta^* \lambda}(B)$, respectively, normalized so as to take value $1$ when evaluated at $1_A$ and $1_B$, respectively.
\end{corollary}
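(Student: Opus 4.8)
The plan is to recognize that the functions \eqref{3.17} and \eqref{3.18} are nothing other than restrictions of the degree function. Indeed, by Corollary \ref{corollary3.6} (applied with $\beta = \operatorname{id}_A$), the product appearing in \eqref{3.17} agrees on all of $\End^0(A)$ with $\deg(\cdot) : \End^0(A) \rightarrow \QQ$, so \eqref{3.17} is the restriction of $\deg(\cdot)$ to $\End^0_\lambda(A)$; likewise \eqref{3.18} is the restriction of $\deg(\cdot) : \End^0(B) \rightarrow \QQ$ to $\End^0_{\beta^* \lambda}(B)$. Moreover, the ring isomorphism $[\beta]$ of \eqref{3.7} carries $\End^0_{\beta^*\lambda}(B)$ onto $\End^0_\lambda(A)$ by Corollary \ref{cor3.3}, preserves degrees by \eqref{3.9}, and sends $1_B$ to $1_A$; so once the assertion is proved for \eqref{3.17} it follows for \eqref{3.18} by composing with $[\beta]$. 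It therefore suffices to show that $\deg(\cdot)$ restricted to $\End^0_\lambda(A)$ is the square of a rational valued homogeneous polynomial of degree $g$ taking the value $1$ at $1_A$.

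To exhibit this square root explicitly, I would transport the degree function to the N\'{e}ron-Severi space along the isomorphism $\Phi_\lambda$ of \S \ref{Sec2.4.1}. For $D \in \NS^0(A)$ with $\alpha = \Phi_\lambda(D)$, Lemma \ref{lemma3.1} gives $\deg(\alpha) = \deg(\phi_D)/\deg(\phi_\lambda)$. The Riemann-Roch theorem for abelian varieties, together with the identity $\deg(\phi_D) = ((D^g)/g!)^2$ relating the degree of $\phi_D$ to the self-intersection number \cite{Mum:v1}, then yields
\[
\deg(\Phi_\lambda(D)) = \frac{((D^g)/g!)^2}{((\lambda^g)/g!)^2} = \left( \frac{(D^g)}{(\lambda^g)} \right)^2 .
\]
Hence the desired square root is the function $D \mapsto (D^g)/(\lambda^g)$, regarded as a function on $\End^0_\lambda(A)$ via $\Phi_\lambda^{-1}$.

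It then remains to check the three required properties of this square root. Homogeneity of degree $g$ is immediate, since $D \mapsto (D^g)$ is the $g$-fold self-intersection form, a homogeneous polynomial of degree $g$ on $\NS^0(A)$, and $\Phi_\lambda^{-1}$ is $\QQ$-linear; rationality holds because the integer-valued intersection form extends $\QQ$-multilinearly to $\NS^0(A) = \QQ \otimes_\ZZ \NS(A)$ while $(\lambda^g)$ is a positive integer; and the normalization follows from $\Phi_\lambda(\lambda) = \phi_\lambda^{-1} \circ \phi_\lambda = 1_A$, which gives the value $(\lambda^g)/(\lambda^g) = 1$ at $1_A$ and simultaneously pins down the sign of the square root. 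The case of \eqref{3.18} then follows at once by pulling back along $[\beta]$, as explained above.

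I expect the only genuinely substantive point to be the passage from a pointwise statement to a polynomial one: the computation above shows that $\deg$ takes a square value at each point of $\End^0_\lambda(A)$, but what is actually needed is that $\deg$ equals the square of a single polynomial, and it is the explicit root $D \mapsto (D^g)/(\lambda^g)$ furnished by Riemann-Roch that supplies this. For this it is worth recording that $\deg(\phi_D) = ((D^g)/g!)^2$ is an identity of polynomials in $D$: both sides are homogeneous polynomials of degree $2g$ on $\NS^0(A)$, and they agree on the Zariski-dense ample cone, hence everywhere. This guarantees that the extracted square root is genuinely a polynomial in $D$ rather than merely a pointwise square root.
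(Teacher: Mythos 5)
Your proposal is correct and takes essentially the same route as the paper: both identify \eqref{3.17} and \eqref{3.18} with restrictions of the degree function via Proposition \ref{theorem3.5} and Corollary \ref{corollary3.6}, transport to $\NS^0(A)$ along $\Phi_\lambda$, and use Lemma \ref{lemma3.1} together with the Riemann-Roch identity $\deg \phi_D = ((D^g)/g!)^2$ to exhibit the explicit square root $D \mapsto (D^g)/(\lambda^g)$, normalized to take the value $1$ at $1_A$. The only minor variations are that the paper establishes \eqref{3.18} by re-running the argument on $B$ with $\beta^*\lambda$ while you deduce it from \eqref{3.17} by transporting along $[\beta]$ using Corollary \ref{cor3.3} and \eqref{3.9} (both valid), and that your closing remark on the pointwise-versus-polynomial issue makes explicit the step the paper compresses into ``by $\QQ$-linearity'' and ``clearly.''
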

\begin{proof}
The conclusions desired by Corollary \ref{corollary3.7} follow from Proposition \ref{theorem3.5} and \eqref{3.16} in conjunction with the Riemann-Roch theorem, \cite[\S 16, p.~150]{Mum:v1}.  Indeed, in order to establish \eqref{3.17}, let $\alpha = (\alpha_1,\dots, \alpha_k) \in \End^0_\lambda(A)$.  Then $\alpha = \Phi_\lambda(D)$ for some class of a $\QQ$-divisor $D$.  We first assume that $D$ is integral.  Then $\alpha = \phi^{-1}_\lambda \circ \phi_D$ and so, by \eqref{3.6},
\begin{equation}\label{3.19}
\deg(\alpha) = \deg(\phi^{-1}_\lambda) \deg(\phi_D).
\end{equation}
Using \eqref{3.19} together with Proposition \ref{theorem3.5} (b) and the Riemann-Roch theorem \cite[\S 16, p.~150]{Mum:v1}, we obtain
\begin{equation}\label{3.20}
\left( \frac{(D^g)}{g!}\right)^2 = \left( \frac{(\lambda^g)}{g!}\right)^2 \prod_{i=1}^k \Nrd_{R_i/\QQ}(\alpha_i)^{2g_i/(t_im_i)}.
\end{equation}
By $\QQ$-linearity, the formula \eqref{3.20} also holds for the case that $D$ is a $\QQ$-divisor.  Finally, since the left hand side of \eqref{3.20} is clearly the square of a homogeneous polynomial function of degree $g$ on $\End^0_\lambda(A)$, the same is true for the formula \eqref{3.17}.  

The formula \eqref{3.18} is established in the same manner by replacing $\lambda$ with $\beta^* \lambda$ and using the Riemann-Roch theorem for divisors on $B$.
\end{proof}

\np\label{} {\bf Notation.}  In what follows, we denote the normalized degree $g$ homogeneous polynomial functions whose squares are \eqref{3.17} and \eqref{3.18} respectively by
\begin{equation}\label{3.21}
\mathrm{pNrd}_\lambda(\cdot) : \End^0_\lambda(A) \rightarrow \QQ
\end{equation}
and
\begin{equation}\label{3.22}
\mathrm{pNrd}_{\beta^* \lambda}(\cdot) : \End^0_{\beta^* \lambda}(B) \rightarrow \QQ.
\end{equation}

\section{The Riemann-Roch and index theorems for Abelian varieties}\label{4}

In this section, we show how the Riemann-Roch theorem for divisors on an abelian variety is related to the reduced norm of the Wedderburn components of its $\QQ$-endomorphism algebra.  Specifically, in \S \ref{Sec4.1}, we prove Theorem \ref{theorem4.1} which has as a consequence Corollary \ref{corollary4.2}, equivalently Theorem \ref{theorem1.1} stated in \S \ref{1}.   We then show, in \S \ref{Sec4.2}, how Theorem \ref{theorem4.1} is related to the index theorem of Mumford \cite[\S 16, p.~155]{Mum:v1} which concerns understanding the cohomology groups of line bundles on abelian varieties.

\np\label{Sec4.1}{\bf The Riemann-Roch theorem.}  Suppose that $A = A_1^{r_1} \times \dots \times A_k^{r_k}$ with $A_i$ simple and pairwise nonisogenous abelian varieties.  Let $g := \dim A$, $g_i := \dim A_i$ and write $R := \End^0(A) = R_1 \times \dots \times R_k$ with $\Delta_i = \End^0(A_i)$ and $R_i = M_{r_i}(\Delta_i)$.  We also fix an ample divisor $\lambda$ on $A$, an isogeny $\beta : B \rightarrow A$, and we use the other notational conventions described in \S \ref{Sec2.2}.

We interpret the Riemann-Roch theorem, see \cite[\S 16, p.~150]{Mum:v1} for example, in terms of the polynomial functions \eqref{3.21} and \eqref{3.22} determined by the reduced norms of the Wedderburn components of $R$ and the numbers $m_i$, $t_i$ and $g_i$.

\begin{theorem}\label{theorem4.1}
In the setting of \S \ref{Sec4.1} just described, the following assertions hold true.
\begin{enumerate}
\item{If $D$ is a $\QQ$-divisor on $A$ and $\Phi_\lambda(D) = \alpha = (\alpha_1,\dots, \alpha_k) \in \End_\lambda^0(A)$, then
\begin{equation}\label{4.1}
\frac{(D^g)}{g!} = \frac{(\lambda^g)}{g!} \mathrm{pNrd}_\lambda(\alpha) = \sqrt{\deg \phi_\lambda}\mathrm{pNrd}_\lambda(\alpha).
\end{equation} }
\item{If $D$ is a $\QQ$-divisor on $B$ and 
$$[\beta]\Phi_{\beta^* \lambda}(D) = \alpha = (\alpha_1,\dots, \alpha_k) \in [\beta]\End^0_{\beta^* \lambda}(B) \subset \End^0_\lambda(A), $$
then
\begin{multline}\label{4.2}
\frac{(D^g)}{g!} = \frac{(\beta^* \lambda^g)}{g!} \mathrm{pNrd}_{\beta^*\lambda}(\Phi_{\beta^* \lambda}(D))  
=  
\frac{(\beta^* \lambda^g)}{g!} \mathrm{pNrd}_{\lambda}(\alpha) = \sqrt{\deg \phi_{\beta^* \lambda}}\mathrm{pNrd}_\lambda(\alpha).
\end{multline}
}
\end{enumerate}
\end{theorem}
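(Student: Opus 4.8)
\emph{Proof strategy.} The plan is to deduce everything from Corollary~\ref{corollary3.7}, which already controls the \emph{squares} of the quantities appearing in \eqref{4.1} and \eqref{4.2}; the only genuine work is to extract the correct square root, i.e.\ to fix signs. Throughout I use that $\Phi_\lambda$ of \eqref{eqn1.2} identifies $\NS^0(A)$ with $\End^0_\lambda(A)$, so that $(D^g)/g!$ may be regarded as a function of $\alpha=\Phi_\lambda(D)$, and that polynomial functions on a finite dimensional vector space over the infinite field $\QQ$ form an integral domain: if two of them have equal squares they must coincide up to a single global sign.

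\emph{Part (a).} First I would record that the Riemann--Roch theorem gives $\deg\phi_\lambda=\big((\lambda^g)/g!\big)^2$; since $\lambda$ is ample we have $(\lambda^g)>0$, whence $\sqrt{\deg\phi_\lambda}=(\lambda^g)/g!$ with the positive root, which is the second equality in \eqref{4.1}. For the first equality, note that both $\alpha\mapsto (D^g)/g!$ and $\alpha\mapsto \big((\lambda^g)/g!\big)\,\mathrm{pNrd}_\lambda(\alpha)$ are homogeneous degree-$g$ polynomial functions on $\End^0_\lambda(A)$, and that \eqref{3.20} in the proof of Corollary~\ref{corollary3.7} asserts precisely that their squares agree. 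By the integral-domain remark they therefore differ by a global sign. To pin it down I would evaluate at $D=\lambda$, where $\alpha=\Phi_\lambda(\lambda)=\phi_\lambda^{-1}\circ\phi_\lambda=1_A$: the left side equals $(\lambda^g)/g!>0$ while the right side equals $\big((\lambda^g)/g!\big)\,\mathrm{pNrd}_\lambda(1_A)=(\lambda^g)/g!$ by the normalization $\mathrm{pNrd}_\lambda(1_A)=1$. Both are positive, so the sign is $+1$ and \eqref{4.1} follows.

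\emph{Part (b).} Here I would apply part (a) on $B$ to the ample divisor $\beta^*\lambda$ (ample because $\beta$ is an isogeny). This immediately yields $(D^g)/g!=\big(((\beta^*\lambda)^g)/g!\big)\,\mathrm{pNrd}_{\beta^*\lambda}(\Phi_{\beta^*\lambda}(D))$, the first equality of \eqref{4.2}, together with $((\beta^*\lambda)^g)/g!=\sqrt{\deg\phi_{\beta^*\lambda}}$, the last equality. It then remains to prove $\mathrm{pNrd}_{\beta^*\lambda}(\Phi_{\beta^*\lambda}(D))=\mathrm{pNrd}_\lambda(\alpha)$ for $\alpha=[\beta]\Phi_{\beta^*\lambda}(D)$. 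By the defining formulas \eqref{3.17} and \eqref{3.18}, the squares of these two numbers are both equal to $\prod_{i=1}^k\Nrd_{R_i/\QQ}(\alpha_i)^{2g_i/(t_im_i)}$, so I would again invoke the integral-domain argument: Corollary~\ref{cor3.3} shows that $[\beta]$ restricts to a linear isomorphism $\End^0_{\beta^*\lambda}(B)\xrightarrow{\sim}\End^0_\lambda(A)$, hence $\mathrm{pNrd}_\lambda\circ[\beta]$ and $\mathrm{pNrd}_{\beta^*\lambda}$ are polynomial functions on $\End^0_{\beta^*\lambda}(B)$ with equal squares. Evaluating at $1_B$, which maps to $1_A$ under $[\beta]$ and at which both functions take the value $1$, fixes the sign to $+1$ and gives the middle equality of \eqref{4.2}.

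\emph{Main obstacle.} The substance of Theorem~\ref{theorem4.1} beyond Corollary~\ref{corollary3.7} is exactly the sign bookkeeping just described: Corollary~\ref{corollary3.7} only sees squares, so the task is to upgrade an equality of squares to a signed identity. The hard part is conceptual rather than computational --- one must notice that working with genuine polynomial functions on a $\QQ$-vector space (not merely set-theoretic functions) reduces the ambiguity to a single global sign, which is then determined at the one canonical point $1_A$ (respectively $1_B$), where ampleness of $\lambda$ (respectively $\beta^*\lambda$) forces positivity.
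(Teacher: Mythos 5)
Your proposal is correct and takes essentially the same approach as the paper: the paper likewise deduces \eqref{4.1} from \eqref{3.20} and the normalization \eqref{3.21} by writing $(D^g)/g! = \mathrm{C}\cdot\frac{(\lambda^g)}{g!}\,\mathrm{pNrd}_\lambda(\alpha)$ for a constant $\mathrm{C}$ independent of $D$ and then solving $\mathrm{C}=1$ by evaluating at $D=\lambda$ (where $\Phi_\lambda(\lambda)=1_R$ and $\mathrm{pNrd}_\lambda(1_R)=1$), handling (b) by the same argument with $\beta^*\lambda$ in place of $\lambda$. Your integral-domain remark simply makes explicit why the paper's constant is a single global sign, a step the paper leaves implicit.
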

\begin{proof}
We content ourselves with proving \eqref{4.1} as the proof of \eqref{4.2} follows similarly by replacing $\lambda$ with $\beta^*\lambda$.  To achieve our goal, we have, by \eqref{3.20} and \eqref{3.21}, that 
\begin{equation}\label{4.3}
\frac{(D^g)}{g!} = \mathrm{C} \times \frac{(\lambda^g)}{g!} \mathrm{pNrd}_\lambda(\alpha),
\end{equation}
for some constant $\mathrm{C}$ independent of $D$.  To solve for $\mathrm{C}$, we replace $D$ by $\lambda$ in \eqref{4.3}.  Since $\Phi_\lambda(\lambda) = 1_R$ and $\mathrm{pNrd}_\lambda(1_R) = 1$, we conclude that $\mathrm{C}=1$.
\end{proof}

As a special case of Theorem \ref{theorem4.1} we have the following consequence which we also stated in \S \ref{1} as Theorem \ref{theorem1.1}.
 
\begin{corollary}\label{corollary4.2}
If $\lambda$ is a principal polarization on $A$, $D$ a divisor on $A$ and $\alpha = \Phi_\lambda(D)$, then
$$
\frac{(D^g)}{g!} = \mathrm{pNrd}_\lambda(\alpha).
$$
\end{corollary}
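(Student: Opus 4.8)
The plan is to deduce the corollary directly from Theorem \ref{theorem4.1}(1), whose formula \eqref{4.1} already isolates the relevant normalizing factor. First I would recall from \S\ref{Sec2.4.1} that, by definition, $\lambda$ being a principal polarization means precisely that $(\lambda^g) = g!$; equivalently, $\phi_\lambda$ is an isomorphism of abelian varieties, so that $\deg\phi_\lambda = 1$. This definitional fact is the only ingredient the corollary adds to what Theorem \ref{theorem4.1} already supplies.

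With this in hand the argument is immediate: I apply \eqref{4.1} to the given divisor $D$ with $\alpha = \Phi_\lambda(D)$. The two equal right-hand expressions in \eqref{4.1} are $\frac{(\lambda^g)}{g!}\,\mathrm{pNrd}_\lambda(\alpha)$ and $\sqrt{\deg\phi_\lambda}\,\mathrm{pNrd}_\lambda(\alpha)$. Under the principal-polarization hypothesis the scalar $\frac{(\lambda^g)}{g!}$ equals $1$ (equivalently $\sqrt{\deg\phi_\lambda} = 1$), so both collapse to $\mathrm{pNrd}_\lambda(\alpha)$, yielding the asserted identity $\frac{(D^g)}{g!} = \mathrm{pNrd}_\lambda(\alpha)$.

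I would also confirm that $\alpha = \Phi_\lambda(D)$ lands where the statement places it. When $\lambda$ is a principal polarization, \S\ref{Sec2.4.1} records that $\Phi_\lambda$ restricts to a $\ZZ$-linear isomorphism $\NS(A) \xrightarrow{\sim} \End_\lambda(A)$, so for an integral divisor $D$ the element $\alpha$ indeed lies in $\End_\lambda(A)$, consistent with the formulation of Theorem \ref{theorem1.1}; for a general $\QQ$-divisor the identical computation takes place in $\End^0_\lambda(A)$ and $\mathrm{pNrd}_\lambda(\cdot)$ is evaluated there.

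There is essentially no obstacle: the entire content of the corollary is carried by Theorem \ref{theorem4.1}, and the only thing to check is that the condition $(\lambda^g) = g!$ makes the normalizing constant trivial. The single point deserving a moment's care is the internal consistency of the two normalizations displayed in \eqref{4.1} --- namely that $\frac{(\lambda^g)}{g!} = \sqrt{\deg\phi_\lambda}$ --- but this is exactly the Riemann-Roch relation $\bigl(\frac{(\lambda^g)}{g!}\bigr)^2 = \deg\phi_\lambda$ already invoked in establishing \eqref{3.20}, combined with the positivity of $\frac{(\lambda^g)}{g!}$ for the ample class $\lambda$.
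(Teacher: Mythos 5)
Your proposal is correct and matches the paper's own proof, which likewise deduces the corollary from Theorem \ref{theorem4.1}(a) by observing that a principal polarization satisfies $(\lambda^g) = g!$, making the normalizing factor $\frac{(\lambda^g)}{g!}$ equal to $1$. Your additional checks --- that $\alpha$ lies in $\End_\lambda(A)$ via the isomorphism of \S\ref{Sec2.4.1}, and that $\frac{(\lambda^g)}{g!} = \sqrt{\deg\phi_\lambda}$ by the Riemann--Roch relation underlying \eqref{3.20} --- are sound but not needed beyond what the paper records.
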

\begin{proof}[Proof of Corollary \ref{corollary4.2} and Theorem \ref{theorem1.1}]
Follows from \eqref{4.1} because if $\lambda$ is a principal polarization, then $(\lambda^g) = g!$.
\end{proof}

\np\label{Sec4.2} {\bf The index theorem.}  Let $A$ be an abelian variety of dimension $g$.  We let $\lambda$ be an ample divisor on $A$ and $D$ a $\QQ$-divisor on $A$.  The \emph{Hilbert polynomial} of $D$ with respect to $\lambda$ is the polynomial
\begin{equation}\label{4.5}
\mathrm{Hp}_\lambda(D;N) := \frac{1}{g!}((N\lambda+D)^g) \in \QQ[N].
\end{equation}

In the case that $D$ is integral, then the complex roots of the polynomial \eqref{4.5} are real \cite[Thm.~2, p.~98]{Mum:Quad:Eqns} and \cite[\S 16, p.~155]{Mum:v1}.  Furthermore, the dimension of the scheme theoretic kernel of $\phi_D$ equals the multiplicity of zero as a root of \eqref{4.5}.  Also, counting the (real) roots of $\mathrm{Hp}_\lambda(D;N)$ with multiplicity, it follows that $\H^j(A,\Osh_A(D)) = 0$ for $0 \leq j < \# \text{ of positive roots}$ and $\H^{g-j}(A,\Osh_A(D)) = 0$ for $0 \leq j < \# \text{ of negative roots}$, \cite[Thm.~2, p.~98]{Mum:Quad:Eqns}.  (Note that when $\kk$ is not algebraically closed we reduce to that case by first applying the flat base change theorem, compare with \cite[Lem.~2.4, p.~1727]{Shin}.)

Let $\ii(D)$ be the number of positive roots of the polynomial \eqref{4.5}.  This number is independent of the choice of ample divisor $\lambda$, \cite[p.~99]{Mum:Quad:Eqns}, and we refer to it as the \emph{index} of $D$.  Since numerically equivalent divisors have the same Hilbert polynomial, the polynomial \eqref{4.5} is well-defined on numerical equivalence classes of divisors.  As a consequence, the number $\ii(D)$ is also well-defined modulo numerical equivalence.  In particular it is well-defined modulo algebraic equivalence.

To define the index of a $\QQ$-divisor $D$, let $n $ be a positive integer so that $nD$ is integral and define $\ii(D) = \ii(nD)$.

\begin{lemma}\label{lemma3.3}
Let $D$ be a $\QQ$-divisor on  $A$.  The number $\ii(D)$ is well-defined and equals the number of positive roots counted with multiplicity of the polynomial $\mathrm{Hp}_\lambda(D;N)$.
\end{lemma}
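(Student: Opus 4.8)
The plan is to reduce the $\QQ$-divisor statement to the integral case by means of a simple scaling relation between Hilbert polynomials. First I would record the identity coming from multilinearity of the intersection pairing: fixing a positive integer $n$ with $nD$ integral, the factorization $N\lambda + nD = n\bigl(\tfrac{N}{n}\lambda + D\bigr)$ together with homogeneity of the degree-$g$ intersection form gives
\[
\mathrm{Hp}_\lambda(nD;N) = \frac{1}{g!}\bigl((N\lambda + nD)^g\bigr) = n^g\,\mathrm{Hp}_\lambda\!\Bigl(D;\tfrac{N}{n}\Bigr).
\]
Thus, as a polynomial in $N$, $\mathrm{Hp}_\lambda(nD;N)$ is obtained from $\mathrm{Hp}_\lambda(D;N)$ by the invertible linear substitution $N \mapsto N/n$ followed by multiplication by the nonzero scalar $n^g$.

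Next I would read off the consequences for roots. The substitution $N \mapsto N/n$ induces a bijection between the complex roots of $\mathrm{Hp}_\lambda(D;\cdot)$ and those of $\mathrm{Hp}_\lambda(nD;\cdot)$, sending a root $N_0$ to $nN_0$ and preserving multiplicities, since a linear change of variable does not alter the factorization type of a polynomial. Because $nD$ is integral, the reality of the roots recalled above (\cite[Thm.~2, p.~98]{Mum:Quad:Eqns}) applies to $\mathrm{Hp}_\lambda(nD;\cdot)$; transporting this back through the bijection shows that the roots of $\mathrm{Hp}_\lambda(D;\cdot)$ are real as well, so that counting positive roots is meaningful. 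As $n > 0$, the correspondence $N_0 \mapsto nN_0$ preserves sign, and hence the number of positive roots counted with multiplicity is the same for $\mathrm{Hp}_\lambda(D;\cdot)$ and $\mathrm{Hp}_\lambda(nD;\cdot)$.

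Finally I would assemble the conclusion. By definition $\ii(D) = \ii(nD)$, and $\ii(nD)$ is the number of positive roots, counted with multiplicity, of $\mathrm{Hp}_\lambda(nD;\cdot)$; by the previous paragraph this equals the number of positive roots, with multiplicity, of $\mathrm{Hp}_\lambda(D;\cdot)$. Since this last quantity does not mention $n$ at all, both the well-definedness of $\ii(D)$ and the asserted formula follow at once.

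I do not anticipate a genuine obstacle in this argument; the only points demanding care are the bookkeeping of multiplicities under the linear substitution and the observation that the reality of the roots for the $\QQ$-divisor $D$ is to be \emph{inherited} from the integral divisor $nD$ rather than established directly.
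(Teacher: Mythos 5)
Your proof is correct, but it takes a genuinely different route from the paper's. The paper rescales the \emph{polarization} along with the divisor: it computes the single identity $\mathrm{Hp}_{n\lambda}(nD;N) = n^g\,\mathrm{Hp}_{\lambda}(D;N)$, so that the two polynomials have literally the same roots in the same variable $N$, and then implicitly invokes the fact, recalled just before the lemma from \cite[p.~99]{Mum:Quad:Eqns}, that the index of the integral divisor $nD$ may be computed with respect to \emph{any} ample divisor --- in particular with respect to $n\lambda$. You instead keep $\lambda$ fixed, derive the relation $\mathrm{Hp}_\lambda(nD;N) = n^g\,\mathrm{Hp}_\lambda\bigl(D;N/n\bigr)$, and track the roots through the dilation $N_0 \mapsto nN_0$, which preserves reality, sign, and multiplicity because $n>0$. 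What the paper's choice of $n\lambda$ buys is economy: there is no change of variable and no root bookkeeping at all, the polynomial identity up to the scalar $n^g$ doing all the work. What your version buys is a more self-contained argument: you never need the invariance of the index under change of ample divisor, only the definition of $\ii(nD)$ relative to the fixed $\lambda$ and the reality of the roots for integral divisors, at the modest cost of verifying that an invertible linear substitution preserves the factorization data. Both arguments are complete, and your observation that reality for the $\QQ$-divisor is inherited from $nD$ through the bijection is exactly the right point to flag.
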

\begin{proof}
We simply note
$$
\mathrm{Hp}_{n\lambda}(nD;N) = \frac{1}{g!} ((Nn\lambda+ nD)^g) = \frac{n^g}{g!}((N\lambda + D)^g) = n^g \mathrm{Hp}_{\lambda}(D;N).
$$
\end{proof}

We now return to the setting of \S \ref{4.1}.  The following theorem and its corollary are due to Mumford. Indeed, they can be seen as slightly more explicit forms of calculations performed in \cite[\S 21, p.~209]{Mum:v1}.

\begin{theorem}\label{theorem4.4}
In the setting of \S \ref{4.1}, the following assertions hold true.
\begin{enumerate}
\item{If $D$ is a $\QQ$-divisor on $A$ and $\Phi_\lambda(D) = \alpha = (\alpha_1,\dots, \alpha_k)$, the image of $D$ in $\End_\lambda^0(A)$, then $\ii(D)$ equals the number of positive roots counted with multiplicity of the polynomial
\begin{equation}\label{4.7}
p_{D,\lambda}(N) := \mathrm{pNrd}_\lambda(N\operatorname{id}_A + \alpha).
\end{equation}
}
\item{
If $D$ is a $\QQ$-divisor on $B$ and $[\beta]\Phi_{\beta^* \lambda}(D) = \alpha \in \End^0_{\lambda}(A)$, then $\ii(D)$ equals the number of positive roots counted with multiplicity of the polynomial
\begin{equation}\label{4.8}
p_{D,\beta^* \lambda}(N) := \mathrm{pNrd}_{\beta^*\lambda}(N \operatorname{id}_B + \Phi_{\beta^* \lambda}(D)) = \mathrm{pNrd}_\lambda(N\operatorname{id}_A + \alpha).
\end{equation}
}
\end{enumerate}
\end{theorem}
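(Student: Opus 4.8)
The plan is to reduce both assertions directly to Theorem \ref{theorem4.1}, by recognizing that the Hilbert polynomial $\mathrm{Hp}_\lambda(D;N)$ is nothing but the identity \eqref{4.1} applied to the one-parameter family of $\QQ$-divisors $N\lambda + D$. The point is that the polynomial $p_{D,\lambda}(N)$ of \eqref{4.7} and the Hilbert polynomial differ only by the positive constant $(\lambda^g)/g!$, so they share the same roots with the same multiplicities, and the claim about $\ii(D)$ then follows from Lemma \ref{lemma3.3}.

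First I would observe that, since $\Phi_\lambda$ is $\QQ$-linear and $\Phi_\lambda(\lambda) = 1_R = \operatorname{id}_A$, for every rational value of $N$ the $\QQ$-divisor $N\lambda + D$ satisfies
$$\Phi_\lambda(N\lambda + D) = N\operatorname{id}_A + \alpha.$$
Applying Theorem \ref{theorem4.1}~(1) to $N\lambda + D$ in place of $D$ then gives
$$\mathrm{Hp}_\lambda(D;N) = \frac{((N\lambda + D)^g)}{g!} = \frac{(\lambda^g)}{g!}\,\mathrm{pNrd}_\lambda(N\operatorname{id}_A + \alpha) = \frac{(\lambda^g)}{g!}\,p_{D,\lambda}(N).$$
Both sides are polynomials in $N$ of degree $g$ (the right-hand side because $\mathrm{pNrd}_\lambda$ is homogeneous of degree $g$ and $N\operatorname{id}_A + \alpha$ is affine in $N$ with values in $\End^0_\lambda(A)$), and the displayed equality holds for infinitely many $N$; hence it is an identity of polynomials.

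Next I would use that $(\lambda^g)/g! > 0$, because $\lambda$ is ample. Multiplying a polynomial by a nonzero constant changes neither its roots nor their multiplicities, so $\mathrm{Hp}_\lambda(D;N)$ and $p_{D,\lambda}(N)$ have exactly the same roots with the same multiplicities, and in particular the same number of positive roots. Since, by Lemma \ref{lemma3.3} together with the definition of the index, $\ii(D)$ equals the number of positive roots of $\mathrm{Hp}_\lambda(D;N)$ counted with multiplicity, assertion (1) follows at once.

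Finally, assertion (2) is proved by the same argument carried out on $B$ with the ample divisor $\beta^*\lambda$. Here I would note that $\Phi_{\beta^*\lambda}(N\beta^*\lambda + D) = N\operatorname{id}_B + \Phi_{\beta^*\lambda}(D)$, and that, since $[\beta](\operatorname{id}_B) = \operatorname{id}_A$ and $[\beta]\Phi_{\beta^*\lambda}(D) = \alpha$, the relation \eqref{4.2} yields $\mathrm{pNrd}_{\beta^*\lambda}(N\operatorname{id}_B + \Phi_{\beta^*\lambda}(D)) = \mathrm{pNrd}_\lambda(N\operatorname{id}_A + \alpha)$, which is precisely the asserted equality of the two expressions for $p_{D,\beta^*\lambda}(N)$ in \eqref{4.8}. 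Applying Theorem \ref{theorem4.1}~(2) in place of (1) and using $(\beta^*\lambda^g)/g! > 0$ (as $\beta^*\lambda$ is again ample) then gives $\mathrm{Hp}_{\beta^*\lambda}(D;N) = \big((\beta^*\lambda^g)/g!\big)\,p_{D,\beta^*\lambda}(N)$, and one concludes as before. I do not expect a genuine obstacle in this proof: the substantive content has already been isolated in Theorem \ref{theorem4.1} and in the reality of the roots established by Mumford's index theorem, so the only point requiring care is the passage from the pointwise identity to an identity of polynomials, which is immediate since two polynomials that agree at infinitely many values coincide.
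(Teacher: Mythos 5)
Your proof is correct and takes essentially the same route as the paper's: both reduce to Theorem \ref{theorem4.1} applied to the family $N\lambda + D$ (via $\Phi_\lambda(N\lambda + D) = N\operatorname{id}_A + \alpha$), identify the resulting polynomial with $\mathrm{Hp}_\lambda(D;N)$ up to the positive constant $(\lambda^g)/g!$, invoke Lemma \ref{lemma3.3}, and handle (b) by replacing $\lambda$ with $\beta^*\lambda$. You merely make explicit some steps the paper leaves implicit, such as the positivity of $(\lambda^g)$, the passage to an identity of polynomials, and the second equality in \eqref{4.8}.
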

\begin{proof}
To prove (a), by Lemma \ref{4.3}, we need to determine the number of positive roots of the polynomial
$$
\mathrm{Hp}_\lambda(D;N) = \frac{1}{g!}((N\lambda+D)^g).
$$
On the other hand, by Theorem \ref{theorem4.1} (a), the roots of this polynomial are the same as those of the polynomial
$$
\frac{(\lambda^g)}{g!}\mathrm{pNrd}_\lambda(N\operatorname{id}_A + \alpha)
$$
and (a) clearly follows.   Assertion (b) follows similarly with $\lambda$ replaced by $\beta^* \lambda$.
\end{proof}

\section{The index theorem for simple abelian varieties}\label{5}

In this section we consider consequences of Theorem \ref{theorem4.4} when applied to simple abelian varieties.  For convenience of the reader and to help keep some of the following discussion self contained, we start by recalling Albert's classification of division rings with positive involution in \S \ref{5.0}.  We then explain, in \S \ref{5.0'}, how this classification relates to the endomorphism algebras of simple  abelian varieties.  Having recalled some prerequisites in \S \ref{5.0}, \S \ref{5.0'} and \S \ref{5.0'.1}, in \S \ref{Sec5.1} we consider some consequences of Theorem \ref{theorem4.4} in some concrete special cases.  Finally, in \S \ref{5.5} we prove Theorem \ref{theorem5.1} which is motivated by our previous works \cite{Grieve-cup-prod-ab-var} and \cite{Grieve:theta}.

\np\label{5.0}{\bf Division rings with positive involution.}
Albert classified pairs $(\Delta, ')$ where $\Delta$ is a division ring of finite dimension $n := \dim_{\QQ} \Delta$ and $' : \Delta \rightarrow \Delta$ is a positive involution.
We recall here, following the presentation of Mumford \cite[\S 21, p.~193-203]{Mum:v1} closely, some aspects of this classification.  To do so, the centre of a division ring $\Delta$ will be denoted by $Z$ and we let $K := \{\alpha \in Z : \alpha' = \alpha \}$ be the set of elements of $Z$ fixed by the involution $' : \Delta \rightarrow \Delta$.
We also put $e := [K:\QQ]$, $t:=[Z:\QQ]$ and $m^2 := \dim_Z \Delta$.  We then have $n = t m^2$.

Before describing Albert's classification, we fix some terminology and notation. First, by a \emph{quaternion division algebra} over a totally real algebraic number field $K$ we mean a central division algebra $\Delta$ of dimension $4$ over $K$. Second, if $\Delta$ is a central division ring over a  number field $K$, $\nu$ a place of $K$ and $K_\nu$ the completion of $K$ with respect to $\nu$, then, as in \cite[\S 21, Thm., p.~196]{Mum:v1}, we denote by $\mathrm{Inv}_\nu(\Delta)$ the element of $\QQ/\ZZ$ corresponding to the class of $K_\nu \otimes_K \Delta$ in the Brauer group $\mathrm{Br}(K_\nu)$.  Finally if $K$ is a totally real number field and $\sigma : K \hookrightarrow \RR$ an embedding, then we denote by $\RR_{(\sigma)}$ the real numbers $\RR$ considered as a $K$-algebra via $\sigma$.

Having made these remarks, Albert's classification is as follows.

\begin{theorem}[{\cite[\S 21, Thm.~2, p.~201]{Mum:v1}}, {\cite[Prop.~1]{Shimura}}]
\label{Albert:Classified}
Let $\Delta$ be a division algebra of finite dimension over $\QQ$ with positive involution $'$.  Let $Z$ be the centre of $\Delta$ and $K$ the subfield of elements of $Z$ fixed by $'$.  Then $(\Delta,')$ is one of the following four types.

\noindent
Type I.  $\Delta = Z = K$ is a totally real algebraic number field and the involution $'$ is the identity
$$
\alpha'=\alpha.
$$

\noindent
Type II.  $Z = K$ is a totally real algebraic number field and $\Delta$ is a quaternion division algebra over $K$ such that for every embedding $\sigma : K \hookrightarrow \RR$ we have $\RR_{(\sigma)} \otimes_K \Delta \xrightarrow{\sim} M_2(\RR). $
Furthermore, the involution $' : \Delta \rightarrow \Delta$ has the form 
\begin{equation}\label{typeII:involution}
 \alpha' = \gamma (\Trd_{\Delta / K}(\alpha) - \alpha) \gamma^{-1},
\end{equation}
for some $\gamma \in \Delta$ with $\gamma^2 \in K$ and $\gamma^2$ totally negative. Conversely, every involution of the form \eqref{typeII:involution} defines a positive involution on $\Delta$.

\noindent
Type III.  $Z = K$ is a totally real algebraic number field and $\Delta$ is a quaternion division algebra over $K$ such that for every embedding $\sigma: K \hookrightarrow \RR$, $\RR_{(\sigma)} \otimes_K \Delta \simeq \mathbb{H}$ for $\mathbb{H}$ the algebra of Hamiltonian quaternions on $\RR$.  In this case, the involution $'$ takes the form 
$$
\alpha' = \Trd_{\Delta / K}(\alpha) - \alpha.
$$

\noindent
Type IV.  $K$ is a totally real algebraic number field and $Z$ is a totally imaginary quadratic extension of $K$ with conjugation $\tau$ over $Z$.  Then $\Delta$ is a division algebra with centre $Z$ so that the following hold true.
\begin{enumerate}
\item{If $\nu$ is a finite place of $Z$ fixed by $\tau$, then $\mathrm{Inv}_\nu(\Delta) = 0$.}
\item{For every finite place $\nu$ of $Z$, $\mathrm{Inv}_\nu(\Delta) + \mathrm{Inv}_{\tau \nu}(\Delta) = 0$.}
\end{enumerate}
In this case, there exists a positive involution $* : \Delta \rightarrow \Delta$ together with an isomorphism
\begin{equation}\label{ab:eqn1}
\RR \otimes_{\QQ} \Delta \xrightarrow{\sim} M_d(\CC)\times \dots \times M_d(\CC)
\end{equation}
which carries the involution $*$ to the involution
$$(X_1,\dots, X_e) \mapsto (\overline{X}_1^\mathrm{T},\dots, \overline{X}_e^\mathrm{T}), $$
for $\overline{X}_i^{\mathrm{T}}$ the conjugate transpose of $X_i$.

Furthermore, there exists $\gamma \in \Delta$ with $\gamma^* = \gamma$ so that the image of $1\otimes \gamma$ under \eqref{ab:eqn1} is of the form $(\mathrm{A}_1,\dots,\mathrm{A}_e)$ with $\mathrm{A}_i$ Hermitian positive definite matrices and so that the given involution $' : \Delta \rightarrow \Delta$ has the form 
\begin{equation}\label{TypeIV:involution}
\alpha'=\gamma\alpha^* \gamma^{-1}.
\end{equation}
Conversely, every positive involution on $\Delta$ has the form \eqref{TypeIV:involution} for some $\gamma \in \Delta$ with $\gamma^* = \gamma$.
\end{theorem}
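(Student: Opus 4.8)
The plan is to reduce the classification to a statement about the real algebra $\Delta_\RR := \RR \otimes_\QQ \Delta$ carrying the involution induced by $'$, under which the positivity hypothesis becomes positive-definiteness of the associated reduced trace form, and then to descend back to $\QQ$ using the arithmetic of central simple algebras over number fields. First I would note that $Z$ is stable under any anti-automorphism of $\Delta$, so $'$ restricts to an involution of $Z$ with fixed field $K$; hence $[Z:K] \in \{1,2\}$, the two cases being the involutions of the first and second kind. For $\alpha \in K$ one has $\alpha' = \alpha$, whence $\Trd_{\Delta/\QQ}(\alpha \alpha') = c\,\Tr_{K/\QQ}(\alpha^2)$ for a positive rational $c$; positivity of $'$ therefore forces the trace form of $K/\QQ$ to be positive definite, which is exactly the statement that $K$ is totally real. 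This already accounts for the condition on $K$ common to all four types.

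Second, with $K$ totally real and $e := [K:\QQ]$, I would use $\RR \otimes_\QQ K \cong \RR^e$ to split $\Delta_\RR$ into $e$ simple factors and study the induced involution factor by factor. The local input is the classification of positive involutions on a simple $\RR$-algebra $B$: writing $B \cong M_n(D)$ with $D \in \{\RR, \CC, \mathbb{H}\}$, positive-definiteness of $x \mapsto \Trd_{B/\RR}(x x^{*})$ forces the involution to be conjugate, by an inner automorphism, to the transpose on $M_n(\RR)$, the conjugate transpose on $M_n(\CC)$, or the quaternionic conjugate transpose on $M_n(\mathbb{H})$. Recording which case occurs at each real place, together with whether $'$ is of the first kind (acting trivially on $Z$, so $Z = K$) or of the second kind (acting as complex conjugation on $\RR \otimes_\QQ Z \cong \CC^e$, so $Z$ is a CM field), yields precisely the four mutually exclusive possibilities labelled as Types I through IV.

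Third, I would descend from this real picture to the global structure of $\Delta$. For the first kind the existence of $'$ forces $\Delta \cong \Delta^{\mathrm{op}}$, so $\Delta$ has exponent at most two in the Brauer group $\mathrm{Br}(K)$; since index equals exponent over a number field, the Schur index $m$ is at most $2$, and $\Delta$ is either $K$ itself (Type I) or a quaternion division algebra, the totally indefinite and totally definite cases giving Types II and III. For the second kind the governing statement is the Albert--Riehm theorem: a division algebra with centre the CM field $Z$ admits an involution of the second kind over $K$ exactly when the corestriction $\mathrm{cor}_{Z/K}(\Delta)$ is trivial in $\mathrm{Br}(K)$; unwinding this condition over $Z$ through the fundamental exact sequence of class field theory produces the two invariant conditions $\mathrm{Inv}_\nu(\Delta) + \mathrm{Inv}_{\tau\nu}(\Delta) = 0$ for all finite $\nu$ and $\mathrm{Inv}_\nu(\Delta) = 0$ at $\tau$-fixed finite $\nu$ that appear in Type IV.

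Finally, to extract the explicit normal forms of the involutions I would invoke Skolem--Noether: once one positive involution is known to exist, every involution of the same kind differs from it by an inner automorphism $\alpha \mapsto \gamma \alpha^{*} \gamma^{-1}$ for a suitable unit $\gamma$ fixed up to sign by the reference involution, and the positivity requirement then pins $\gamma$ down (forcing $\gamma^2 \in K$ totally negative in Type II, and the Hermitian matrices $\mathrm{A}_i$ to be positive definite in Type IV). I expect the genuine obstacle to be the global descent of the third step: reducing the existence of a positive involution to a corestriction condition in the Brauer group, and then translating that condition into local invariants, is where the real arithmetic lies and where one must call on the Brauer--Hasse--Noether theorem and the Albert--Riehm criterion rather than on elementary algebra.
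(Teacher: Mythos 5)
Your sketch is correct in outline, but note what the paper actually does here: it gives no argument at all, its ``proof'' being the single line ``See \cite[p.~193--209]{Mum:v1}.'' So the right comparison is with Mumford's (i.e.\ Albert's) proof, and your proposal is essentially a faithful reconstruction of it: the positivity of $\Trd_{\Delta/\QQ}(\alpha\alpha')$ restricted to $K$ forcing the trace form of $K/\QQ$ to be positive definite (hence $K$ totally real), the split into involutions of the first and second kind, the factor-by-factor analysis of $\RR\otimes_\QQ\Delta$ against the three standard positive involutions on $M_n(\RR)$, $M_n(\CC)$, $M_n(\mathbb{H})$, and the exponent-$2$/index-equals-exponent argument bounding the Schur index in the first-kind case are all exactly Mumford's steps. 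Your one genuine departure is in Type IV, where you route the existence question through the Albert--Riehm corestriction criterion $\mathrm{cor}_{Z/K}(\Delta)=0$ and then unwind it via the fundamental exact sequence; Mumford instead works directly with the Hasse invariants and the description of $\mathrm{Br}$ of a number field, arriving at conditions (a) and (b) without the corestriction packaging. The two are equivalent, and your version is arguably cleaner to cite. Two points in your sketch deserve more care than you give them. First, in the first-kind quaternion case you must rule out mixed behaviour at the real places (split at some $\sigma$, ramified at others): this follows because the orthogonal-versus-symplectic type of an involution of the first kind is a global invariant preserved under scalar extension, and positivity forces the orthogonal type on $M_2(\RR)$ but the symplectic (canonical) one on $\mathbb{H}$, so all places are forced to agree --- your phrase ``recording which case occurs at each real place'' silently assumes this. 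Second, in Type IV the Albert--Riehm criterion only produces \emph{some} involution of the second kind; upgrading it to a \emph{positive} one of the normal form \eqref{TypeIV:involution} requires producing $\gamma$ with $\gamma^*=\gamma$ whose image under \eqref{ab:eqn1} is a tuple of positive definite Hermitian matrices, which uses the density of the Hermitian elements of $\Delta$ in those of $\RR\otimes_\QQ\Delta$; your appeal to Skolem--Noether handles uniqueness of the form but not this existence step. Neither point is a wrong turn --- both are filled in Mumford's pages that the paper cites --- but they are the two places where your sketch is thinner than the argument it reconstructs.
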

\begin{proof}
See \cite[p.~193-209]{Mum:v1}.
\end{proof}

\np\label{Geometric:Albert:table}{\bf Albert's classification and simple abelian varieties.}\label{5.0'}  We now consider Albert's classification, Theorem \ref{Albert:Classified}, as it pertains to simple abelian varieties.  Specifically, for a simple abelian variety $A$, $\Delta := \End^0(A)$ is a division ring of finite dimension over $\QQ$ and every ample divisor $\lambda$ on $A$ determines a positive involution $r_\lambda : \Delta \rightarrow \Delta$.  The pair $(\Delta, r_\lambda)$ belongs to one of the four types described in Theorem \ref{Albert:Classified} but, as explained in \cite[\S 21, p.~202]{Mum:v1}, the geometry of $A$ places further restrictions on some of the numerical invariants that we can associate to the pair $(\Delta, r_\lambda)$.  These restrictions are summarized in the table 
below which is only notationally different from that of \cite[\S 21, p.~202]{Mum:v1} and \cite[\S 1.3.6.3]{Chai:Conrad:Oort:2014}.

\begin{table}[h]
\label{AlbertTable}
\begin{center}
\begin{tabular}{| l | l | l | l | p{3cm} | p{3cm} |}  \noalign{} \hline \noalign{}
Type & $t$ & $m$ &  $\frac{\dim_\QQ \End^0_\lambda(A)}{\dim_\QQ \End^0(A)}$ & Restriction when $\cchar \kk = 0$, $\dim A = g$ & Restriction when $\cchar \kk > 0$, $\dim A = g$ \\ \noalign{} \hline \noalign{}
I & $e$ & $1$ & $1$ & $t | g$ & $t | g$  \\ \noalign{} \hline \noalign{}
II & $e$ & $2$ & $3/4$ & $2t | g$ & $2t | g $\\  \noalign{} \hline \noalign{}
III & $e$ & $2$ & $1/4$ & $2t | g$ & $t | g$ \\ \noalign{} \hline \noalign{}
IV & $2e $& $m$ & $1/2 $& $em^2 | g$ & $em | g$ \\ \noalign{} \hline \noalign{}
\end{tabular}
\end{center} 
\end{table}

\np\label{5.0'.1} In light of \S \ref{5.0} and \S \ref{5.0'}, it is an interesting question to decide given a division algebra  with positive involution $(\Delta,')$, of one of the types described in Theorem \ref{Albert:Classified} and satisfying the numerical constraints given in \S \ref{5.0'}, if there exists a simple abelian variety $A$ together with an ample divisor $\lambda$ so that the pair $(\End^0(A),r_\lambda)$ is isomorphic to $(\Delta,')$.  As explained in \cite[\S 21, p.~203]{Mum:v1}, when $\cchar \kk =  0$ it is classically known that such a pair $(A,\lambda)$ exists except in the case that $(\Delta, ')$ is of Type III and $g/(2t)$ equals $1$ or $2$ or $(\Delta,')$ is of Type IV and $g/(em^2)$ equals $1$ or $2$.  In these exceptional cases it is known what further restrictions ensure the existence of such an $(A,r_\lambda)$, see \cite[\S 4]{Shimura}.  For the situation that $\cchar \kk > 0$, we refer to \cite{Oort:1988} and \cite{Oort:VanDerPut:1988} for more details, especially \cite[\S 8]{Oort:1988}.

\np\label{Sec5.1}\label{5.0''}  We now consider some aspects of \S \ref{Sec4.2} in the context of \S \ref{5.0} and \S \ref{5.0'}.  To do so, let $A$ be a simple abelian variety of dimension $g$, $\lambda$ an ample divisor on $A$, $\Delta := \End^0(A)$, $Z$ the centre of $\Delta$ and $K \subseteq Z$ the subfield fixed by the Rosati involution $r_\lambda$.  Then $K$, by Theorem \ref{Albert:Classified}, is a totally real field.  As in \S \ref{Sec2.3} we let $m$ denote the Schur index of the division ring $\Delta$, $t := [Z:\QQ]$,  $e := [K:\QQ]$ and $\sigma_1,\dots,\sigma_e$ the embeddings of $K$ into $\RR$.

Suppose that $D$ is a divisor on $A$ with the property that $\Phi_\lambda(D) = \alpha \in K$.  We then have
$$
\Nrd_{\Delta/\QQ}(\alpha) = \Nr_{K/\QQ}(\alpha)^{m[Z:K]} = \prod_{i=1}^e \sigma_i(\alpha)^{m[Z:K]},
$$
and so the polynomial 
$$p_{D,\lambda}(N) = \mathrm{pNrd}_\lambda(N\operatorname{id}_A + \alpha)$$
takes the form
\begin{equation}\label{5.2}
p_{D,\lambda}(N) = \mathrm{pNrd}_\lambda(N\operatorname{id}_A + \alpha) = \prod_{i=1}^e (N\operatorname{id}_A + \sigma_i(\alpha))^{g/e},
\end{equation}
where by $\prod_{i=1}^e(N\operatorname{id}_A + \sigma_i(\alpha))^{g/e}$ in \eqref{5.2} we mean the normalized polynomial function from $\ZZ \times K$ to $\QQ$ whose square is $\prod_{i=1}^e(N\operatorname{id}_A + \sigma_i(\alpha))^{2g/e}$.

In what follows if $\alpha \in K$, then we let 
\begin{equation}\label{inversion:set} \Sigma^-(\alpha) := \{\sigma_j : \sigma_j(\alpha) < 0 \}
\end{equation}
denote the set of embeddings $\sigma_j$ of $K$ for which $\sigma_j(\alpha) < 0$.

\noindent
{\bf Examples.}  
\begin{enumerate}
\item[(i)]{If the pair $(\Delta,r_\lambda)$ is Type I in the sense of Theorem \ref{Albert:Classified}, 
then $\Delta = K$ and $e | g$, by \S \ref{Geometric:Albert:table}.  The polynomial \eqref{5.2} then takes the form
\begin{equation}\label{5.3}
p_{D,\lambda}(N) = \mathrm{pNrd}_\lambda(N \operatorname{id}_A + \alpha) = \Nr_{K/\QQ}(N \operatorname{id}_A + \alpha)^{g/e}.
\end{equation} 
}
\item[(ii)]{Suppose that the pair $(\Delta,r_\lambda)$ is Type IV in the sense of Theorem \ref{Albert:Classified}
but with $\Delta=Z$.  Then $\Delta$ is a totally imaginary quadratic extension of $K$ and $e | g$,  in light of \S \ref{Geometric:Albert:table}.
As a consequence, the polynomial \eqref{5.2} then takes the form
\begin{equation}\label{5.4}
p_{D,\lambda}(N) = \mathrm{pNrd}_\lambda(N\operatorname{id}_A + \alpha) = \Nr_{K/\QQ}(N \operatorname{id}_A + \alpha)^{g/e}.
\end{equation}
}
\end{enumerate}

\np\label{5.5} Next, we consider Theorem \ref{theorem4.4} as well as the discussion of \S\S \ref{5.0}-\ref{Sec5.1} in the context of our previous works \cite{Grieve-cup-prod-ab-var} and \cite{Grieve:theta}.  Specifically, we prove Theorem \ref{theorem5.1} which, among other things, shows how these matters are related to the \emph{pair index condition}, a concept we introduced in \cite{Grieve-cup-prod-ab-var}.
To motivate Theorem \ref{theorem5.1} and also to help further motivate some of what we do in the present article, we start by summarizing some of our \cite{Grieve-cup-prod-ab-var}. 

In more detail, motivated in part by the index theorem discussed in  \S \ref{4.2}, as well as the study of cup-product problems of the form
\begin{equation}\label{cup:prod:1} \H^i(A,\Osh_A(D_1)) \times \H^j(A,\Osh_A(D_2)) \xrightarrow{\bigcup} \H^{i+j}(A,\Osh_A(D_1 +D_2)), 
\end{equation}
determined by pairs of divisors $(D_1,D_2)$ on an abelian variety $A$, in \cite{Grieve-cup-prod-ab-var} we defined the \emph{pair index condition} for a pair of divisors $(D_1,D_2)$ on $A$ to be the condition that the Euler characteristics of $\Osh_A(D_1)$, $\Osh_A(D_2)$, and $\Osh_A(D_1+D_2)$ are nonzero and  $\ii(D_1) + \ii(D_2) = \ii(D_1 + D_2)$. 

The point is that if the Euler characteristics of $\Osh_A(D_1)$, $\Osh_A(D_2)$, and $\Osh_A(D_1+D_2)$ are nonzero, then the condition that $i = \ii(D_1)$, $j = \ii(D_2)$ and $\ii(D_1) + \ii(D_2) = \ii(D_1 + D_2)$ is necessary for the map \eqref{cup:prod:1} to be non-zero; as shown in \cite[\S 7.1.3]{Grieve-cup-prod-ab-var}, this condition is not sufficient in general.

In \cite[\S 2]{Grieve-cup-prod-ab-var} we studied the pair index condition as it pertains to complex abelian varieties with real multiplication.  For example, in \cite[Thm.~2.2, p.~1450036-8]{Grieve-cup-prod-ab-var}, we showed that this condition can be satisfied in all possible instances by certain classes of simple complex abelian varieties with real multiplication by a totally real number field of degree $g$ over $\QQ$.  We also established the more general \cite[Thm.~2.3, p.~1450036-9]{Grieve-cup-prod-ab-var} which concerns satisfying the pair index condition as it applies to certain simple semihomogenous vector bundles on complex abelian varieties with real multiplication.

Here, motivated by these results of \cite{Grieve-cup-prod-ab-var}, we use Theorem \ref{theorem4.4} to prove Theorem \ref{theorem5.1} which gives a fairly complete characterization of the pair index condition as it pertains to pairs of line bundles on simple abelian varieties.

\begin{theorem}\label{theorem5.1}
In the setting of \S \ref{Sec5.1}, and using the notation \eqref{inversion:set}, the following assertions hold true.
\begin{enumerate}
\item{If $D \in \NS^0(A)$ and $\Phi_\lambda(D) = \alpha \in K$, then $\ii(D) = \frac{g}{e} \# \Sigma^-(\alpha)$.}
\item{If $D_1,D_2 \in \NS^0(A)$ are such that $\Phi_\lambda(D_1) = \alpha_1 \in K$ and $\Phi_\lambda(D_2) = \alpha_2 \in K$, then 
$$\ii(D_1) + \ii(D_2) = \ii(D_1 + D_2)$$ if and only if
$$\Sigma^-(\alpha_1 + \alpha_2)  = \Sigma^-(\alpha_1)  \sqcup \Sigma^-(\alpha_2).$$}
\item{Let $n = g/e$.  Then for all $p,q \geq 0$ with $nq + np \leq g$, there exist integral classes $D_1,D_2 \in \NS^0(A)$ which satisfy the conditions that
\begin{enumerate}
\item[{\rm(i)}]{$(D_1^g) \not = 0$, $(D_2^g) \not = 0$; and}
\item[{\rm(ii)}]{$\Phi_\lambda(D_1) \in K$, $\Phi_\lambda(D_2) \in K$; and}
\item[{\rm(iii)}]{$\ii(D_1) = np$, $\ii(D_2) = nq$, and $\ii(D_1 + D_2) = \ii(D_1) + \ii(D_2)$.}
\end{enumerate}}
\end{enumerate}
\end{theorem}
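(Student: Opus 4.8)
The plan is to deduce all three assertions from the factorisation \eqref{5.2} together with Theorem \ref{theorem4.4}, thereby reducing every statement about the index $\ii$ to an elementary statement about the signs of the real numbers $\sigma_1(\alpha),\dots,\sigma_e(\alpha)$. For part (a), I would begin from Theorem \ref{theorem4.4} (a), which identifies $\ii(D)$ with the number of positive roots, counted with multiplicity, of $p_{D,\lambda}(N) = \mathrm{pNrd}_\lambda(N\operatorname{id}_A + \alpha)$. Since the square of $p_{D,\lambda}$ is $\prod_{i=1}^e (N + \sigma_i(\alpha))^{2g/e}$ by \eqref{5.2}, the roots of $p_{D,\lambda}$ are exactly the numbers $-\sigma_i(\alpha)$, and $-\sigma_i(\alpha) > 0$ holds precisely when $\sigma_i \in \Sigma^-(\alpha)$; each such root contributes multiplicity $g/e$. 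Summing over the indices $i$ with $\sigma_i \in \Sigma^-(\alpha)$ then yields $\ii(D) = \frac{g}{e}\#\Sigma^-(\alpha)$.

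For part (b), since $\Phi_\lambda$ is $\QQ$-linear we have $\Phi_\lambda(D_1 + D_2) = \alpha_1 + \alpha_2 \in K$, so part (a) applies to all three divisors and the identity $\ii(D_1) + \ii(D_2) = \ii(D_1 + D_2)$ becomes $\#\Sigma^-(\alpha_1) + \#\Sigma^-(\alpha_2) = \#\Sigma^-(\alpha_1 + \alpha_2)$ after cancelling the common factor $g/e$. The key elementary observation is that $\sigma_j(\alpha_1 + \alpha_2) = \sigma_j(\alpha_1) + \sigma_j(\alpha_2)$ can be negative only when one of the summands is negative, which gives the inclusion $\Sigma^-(\alpha_1 + \alpha_2) \subseteq \Sigma^-(\alpha_1) \cup \Sigma^-(\alpha_2)$ and hence
\[
\#\Sigma^-(\alpha_1 + \alpha_2) \leq \#\big(\Sigma^-(\alpha_1) \cup \Sigma^-(\alpha_2)\big) \leq \#\Sigma^-(\alpha_1) + \#\Sigma^-(\alpha_2).
\]
The counting identity forces both inequalities to be equalities; the second holds exactly when $\Sigma^-(\alpha_1)$ and $\Sigma^-(\alpha_2)$ are disjoint, and then the first holds exactly when their union is $\Sigma^-(\alpha_1 + \alpha_2)$. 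Together these are precisely the condition $\Sigma^-(\alpha_1 + \alpha_2) = \Sigma^-(\alpha_1) \sqcup \Sigma^-(\alpha_2)$, which gives the asserted equivalence in both directions.

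For part (c), the hypothesis $np + nq \leq g = ne$ is equivalent to $p + q \leq e$, so I can fix disjoint subsets $S_1, S_2 \subseteq \{1,\dots,e\}$ with $\#S_1 = p$ and $\#S_2 = q$. The idea is to produce $\alpha_1, \alpha_2 \in K$ whose sign patterns are prescribed by $S_1, S_2$ in a way that also satisfies the disjoint-union criterion of part (b). Using that the product of the embeddings identifies $K \otimes_\QQ \RR$ with $\RR^e$, inside which $K$ is dense, I can approximate within error $\epsilon < 1/2$ a target vector for each $\alpha_i$: for $\alpha_1$, take the $j$-th coordinate to be $-2$ when $j \in S_1$ and $+1$ otherwise, and for $\alpha_2$, take it to be $-2$ when $j \in S_2$ and $+1$ otherwise. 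Then $\Sigma^-(\alpha_1) = S_1$ and $\Sigma^-(\alpha_2) = S_2$, while the coordinates of $\alpha_1 + \alpha_2$ approximate $-1$ on $S_1 \cup S_2$ and $+2$ off it, so that $\Sigma^-(\alpha_1 + \alpha_2) = S_1 \sqcup S_2 = \Sigma^-(\alpha_1) \sqcup \Sigma^-(\alpha_2)$. Setting $D_i := \Phi_\lambda^{-1}(\alpha_i)$ gives $\Phi_\lambda(D_i) = \alpha_i \in K$, which is (ii); since each $\alpha_i$ is a nonzero element of the field $K$ we have $\sigma_j(\alpha_i) \neq 0$ for all $j$, and \eqref{4.1} yields $(D_i^g) \neq 0$, which is (i); finally parts (a) and (b) give $\ii(D_1) = np$, $\ii(D_2) = nq$ and $\ii(D_1 + D_2) = \ii(D_1) + \ii(D_2)$, which is (iii). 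Since $\NS(A)$ is a full-rank lattice in $\NS^0(A)$ and all three conditions are invariant under scaling by a positive integer (as $\Sigma^-(c\alpha) = \Sigma^-(\alpha)$ and $((cD)^g) = c^g (D^g)$ for $c \in \ZZ$, $c > 0$), I may replace $D_1, D_2$ by $cD_1, cD_2$ for a common $c$ that clears denominators, rendering the classes integral.

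The step I expect to be the main obstacle is the magnitude bookkeeping in part (c): prescribing the signs of $\alpha_1$ and $\alpha_2$ separately is not enough, since the disjoint-union criterion of part (b) constrains the sign pattern of the sum as well. Choosing the target values so that a negative entry of one summand dominates a positive entry of the other, while two positive entries remain positive, is exactly what makes the three sign sets compatible, and the density of $K$ in $\RR^e$ is what allows these approximate targets to be realised by genuine elements of $K$; the reduction to integral classes is then routine.
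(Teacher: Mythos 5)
Your proposal is correct and follows essentially the same route as the paper: part (a) via the factorisation \eqref{5.2} combined with Theorem \ref{theorem4.4}(a), part (b) as a sign-counting consequence of (a), and part (c) by the density of $K$ in $K\otimes_\QQ\RR \simeq \RR^{\oplus e}$ with negative entries chosen to dominate the positive ones in the sum, followed by scaling to integral classes. Your explicit target vectors $(-2,+1)$ simply pick out a point of the open set $U \subseteq \RR^{\oplus 2e}$ that the paper intersects with the dense image of $K^{\oplus 2}$, so the two arguments coincide in substance.
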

\begin{proof}
Part (a) is a consequence of \eqref{5.2} combined with  Theorem \ref{theorem4.4} (a).  Part (b) is an immediate consequence of (a).  Part (c) can be proven in a manner similar to the proof of \cite[Thm.~2.2]{Grieve-cup-prod-ab-var}.  Indeed, first fix disjoint subsets $I, J \subseteq \{1,\dots,e \}$ with $\# I = p$ and $\# J = q$.  Next, let $U \subseteq \RR^{\oplus 2e}$ be the subset consisting of those $(x_1,\dots, x_e,y_1,\dots,y_e) \in \RR^{\oplus 2 e}$ with the property that $x_k < 0$ if $k \in I$, $x_k>0$ if $k \not \in I$, $y_k<0$ if $k \in J$, $y_k>0$ if $k \not \in J$ and $x_k + y_k < 0$ for all $k \in I \cup J$.  Then $U$ is a non-empty open subset of $\RR^{\oplus 2e}$.  On the other hand, by \cite[p.~135]{Mar} for instance, the set
$$ S := \{(\sigma_1(\alpha_1),\dots,\sigma_e(\alpha_1), \sigma_1(\alpha_2),\dots, \sigma_e(\alpha_2)) : (\alpha_1,\alpha_2) \in K^{\oplus 2} \}$$
is dense in $\RR^{\oplus 2e}$ and we deduce that $U \cap S \not = \emptyset$. 

Considering the definitions of $U$ and $S$, we conclude that there exist $\alpha_1,\alpha_2 \in K$ with the property that
\begin{enumerate}
\item[(a')]{$\sigma_k(\alpha_1) < 0$ if $k \in I$ and $\sigma_k(\alpha_1) > 0$ if $k \not \in I$}
\item[(b')]{$\sigma_k(\alpha_2) < 0$ if $k \in J$ and $\sigma_k(\alpha_2) > 0$ if $k \not \in J$}
\item[(c')]{$\sigma_k(\alpha_1) + \sigma_k(\alpha_2) < 0$ if $k \in I \cup J$.}
\end{enumerate}
Consequently, if we fix $D_1,D_2\in \NS^0(A)$ with $\rho_{\lambda}(D_1) = \alpha_1$ and $\rho_{\lambda}(D_2) = \alpha_2$, then some positive scalar multiple of $D_1$ and $D_2$ will satisfy the conditions desired by (c).
\end{proof}

\begin{corollary}
For each $p$, $0\leq p \leq e$, $A$ admits an integral divisor $D$ with $\ii(D) = gp/e$.
\end{corollary}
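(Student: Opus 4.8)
The plan is to obtain the corollary as a single-divisor specialization of Theorem \ref{theorem5.1}. By Theorem \ref{theorem5.1}(a), any $D \in \NS^0(A)$ with $\Phi_\lambda(D) = \alpha \in K$ has index $\ii(D) = \frac{g}{e}\#\Sigma^-(\alpha)$, so it is enough to exhibit, for each $p$ with $0 \le p \le e$, an element $\alpha \in K$ whose inversion set $\Sigma^-(\alpha)$ has exactly $p$ elements, and then to pass to an integral divisor representing it. I would first record that $gp/e$ is genuinely an integer: since $A$ is simple, $(\Delta, r_\lambda)$ is one of the four Albert types and the table of \S \ref{Geometric:Albert:table} gives $e \mid g$ in every case, so that $n := g/e \in \ZZ_{>0}$ and $gp/e = np \in \ZZ$.

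Next I would check that every $\alpha \in K$ is realized by a $\QQ$-divisor class. As $K$ is the fixed field in the centre $Z \subseteq \End^0(A)$ under the Rosati involution, we have $K \subseteq \End^0_\lambda(A)$, and $\Phi_\lambda$ maps $\NS^0(A)$ isomorphically onto $\End^0_\lambda(A)$; hence each $\alpha \in K$ equals $\Phi_\lambda(D)$ for a unique class $D \in \NS^0(A)$. To produce $\alpha$ with $\#\Sigma^-(\alpha) = p$ I would run the density argument from the proof of Theorem \ref{theorem5.1}(c) with a single divisor: fix $I \subseteq \{1,\dots,e\}$ with $\#I = p$, let $U \subseteq \RR^{\oplus e}$ be the nonempty open set of tuples that are negative in the coordinates indexed by $I$ and positive elsewhere, and use that $\{(\sigma_1(\alpha),\dots,\sigma_e(\alpha)) : \alpha \in K\}$ is dense in $\RR^{\oplus e}$ (cf.\ \cite[p.~135]{Mar}) to find $\alpha \in K$ with image in $U$, so that $\#\Sigma^-(\alpha) = p$.

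To finish I would set $D_0 := \Phi_\lambda^{-1}(\alpha)$ and choose a positive integer $c$ with $cD_0$ integral, which is possible because $\NS(A)$ is a full lattice in $\NS^0(A)$. Then $D := cD_0$ satisfies $\Phi_\lambda(D) = c\alpha \in K$, and since $c > 0$ we have $\Sigma^-(c\alpha) = \Sigma^-(\alpha)$, whence Theorem \ref{theorem5.1}(a) yields $\ii(D) = \frac{g}{e}\,p = gp/e$. Equivalently, the statement is exactly Theorem \ref{theorem5.1}(c) taken with $q = 0$, in which case the hypothesis $np + nq \le g$ collapses to $p \le e$. I do not anticipate a genuine obstacle here: the only points needing attention are the integrality of $n = g/e$, which comes from the Albert table rather than from \S \ref{Sec5.1} itself, and the remark that multiplying a $\QQ$-divisor by a positive integer alters neither the signs $\sigma_k(\alpha)$ nor the inversion set $\Sigma^-(\alpha)$, so that the passage to an integral representative is harmless.
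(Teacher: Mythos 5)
Your proposal is correct and takes essentially the same route as the paper, whose entire proof is the observation that the statement is Theorem \ref{theorem5.1}(c) specialized to $q=0$, in which case the hypothesis $np+nq\leq g$ becomes exactly $p\leq e$. Your single-divisor density argument simply re-runs the proof of Theorem \ref{theorem5.1}(c), and your supporting checks ($e\mid g$ from the table of \S\ref{Geometric:Albert:table}, and the invariance of $\Sigma^-(\alpha)$ and of $\ii(D)$ under positive scaling to reach an integral class) are sound but already subsumed in that citation.
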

\begin{proof}
Follows from Theorem \ref{theorem5.1} (c).
\end{proof}

\section{Products of abelian varieties}\label{6}

In order to describe the right hand side of \eqref{4.7} and \eqref{4.8} explicitly, we need a clear understanding of $\End^0_\lambda(A)$.  To get a sense for some of the issues involved, in this section we consider the case of product polarizations on products of abelian varieties.  The main point is to prove Proposition \ref{6.1} and Corollary \ref{6.2} which we use in \S \ref{7} where we consider examples.

\np\label{} Let $A_i$, for $i=1,\dots,k$, be abelian varieties with ample divisors $\lambda_i$.  As in \cite[p.~50]{Kani:2016},  for each $\eta_{ji} \in \Hom^0(A_i,A_j)$, let
\begin{equation}\label{6.1}
r_{\lambda_i,\lambda_j}(\eta_{ji}) = \phi^{-1}_{\lambda_i} \circ \widehat{\eta_{ji}} \circ \phi_{\lambda_j} \in \Hom^0(A_j,A_i).
\end{equation}
If each of the $\lambda_i$ determine principal polarizations of the $A_i$, then $r_{\lambda_i,\lambda_j}(\eta_{ji}) \in \Hom(A_j,A_i)$ whenever $\eta_{ji} \in \Hom(A_i,A_j)$.

\np\label{Sec6.2} Let $A = A_1\times \dots \times A_k$, $p_i : A \rightarrow A_i$ the projection and $e_i : A_i \hookrightarrow A$ the inclusion.  Let $\lambda$ denote the ample divisor $\sum_{i=1}^k p_i^* \lambda_i$ on $A$.
If $\alpha \in \End^0(A)$, then define $\alpha_{ij} := p_i \alpha e_j \in \Hom^0(A_j,A_i)$.
We then have:
\begin{equation}\label{6.2}
\End(A) = \{ (\alpha_{ij})_{1\leq i,j \leq k} : \alpha_{ij} \in \Hom(A_j,A_i) \}
\end{equation}
and
\begin{equation}\label{6.3}
\End^0(A) = \{ (\alpha_{ij})_{1\leq i,j \leq k} : \alpha_{ij} \in \Hom^0(A_j,A_i)\}.
\end{equation}

\np\label{} The following proposition is useful for working with examples and is an evident extension of \cite[Prop.~61, p.~51]{Kani:2016}; see also \cite[Lem.~2.3, p.~88]{Katsura:2015} and \cite[p.~1003]{Bauer:1998}.

\begin{proposition}\label{proposition6.1}
In the setting of \S \ref{Sec6.2}, we have that
\begin{equation}\label{6.4}
\End^0_\lambda(A) = \{ (\alpha_{ij}) : \alpha_{ij} = r_{\lambda_i,\lambda_j}(\alpha_{ji}), \alpha_{ij} \in \Hom^0(A_j,A_i)\}.
\end{equation}
Also, if each of the $\lambda_i$ determine principal polarizations of the $A_i$, then it is also true that
\begin{equation}\label{6.5}
\End_\lambda(B) = \{ (\alpha_{ij}) : \alpha_{ij}=r_{\lambda_i,\lambda_j}(\alpha_{ji}), \alpha_{ij} \in \Hom(A_j,A_i)\}.
\end{equation}
\end{proposition}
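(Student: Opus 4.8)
The plan is to reduce everything to a single computation of the Rosati involution $r_\lambda$ in block form with respect to the product decomposition, after which the fixed-point condition $r_\lambda(\alpha)=\alpha$ translates directly into \eqref{6.4}. The one input beyond the product bookkeeping is the behaviour of $\phi_\lambda$ itself.

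First I would analyze $\phi_\lambda$. Since $\lambda = \sum_{i=1}^k p_i^*\lambda_i$ and the assignment $D \mapsto \phi_D$ is additive, we get $\phi_\lambda = \sum_{i=1}^k \phi_{p_i^*\lambda_i}$; combined with the functoriality relation $\phi_{p_i^*\lambda_i} = \hat p_i \circ \phi_{\lambda_i} \circ p_i$ this yields $\phi_\lambda = \sum_{i=1}^k \hat p_i \circ \phi_{\lambda_i}\circ p_i$. Under the canonical identification $\hat A \simeq \hat A_1\times\dots\times\hat A_k$, the dual $\hat p_i$ of the projection is the inclusion of the $i$-th factor while the dual $\hat e_i$ of the inclusion is the projection onto the $i$-th factor; hence $\phi_\lambda$ is the diagonal quasi-isogeny $\mathrm{diag}(\phi_{\lambda_1},\dots,\phi_{\lambda_k})$, and consequently $\phi_\lambda^{-1} = \mathrm{diag}(\phi_{\lambda_1}^{-1},\dots,\phi_{\lambda_k}^{-1})$ in the category of abelian varieties up to isogeny.

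Next I would record the block form of the dual of an endomorphism. Writing $\alpha = \sum_{i,j} e_i \circ \alpha_{ij}\circ p_j$ with $\alpha_{ij}\in\Hom^0(A_j,A_i)$ as in \eqref{6.3}, contravariance of duality gives $\hat\alpha = \sum_{i,j}\hat p_j \circ \widehat{\alpha_{ij}}\circ \hat e_i$; using again the identification of $\hat A$ with $\prod_i \hat A_i$, this says precisely that the $(i,j)$-block of $\hat\alpha$ is $\widehat{\alpha_{ji}}$ (a ``hatted transpose''). Combining the two previous steps, I would compute $r_\lambda(\alpha)=\phi_\lambda^{-1}\circ\hat\alpha\circ\phi_\lambda$ blockwise: because $\phi_\lambda$ and $\phi_\lambda^{-1}$ are diagonal, the $(i,j)$-block of $r_\lambda(\alpha)$ is $\phi_{\lambda_i}^{-1}\circ\widehat{\alpha_{ji}}\circ\phi_{\lambda_j} = r_{\lambda_i,\lambda_j}(\alpha_{ji})$, by the definition \eqref{6.1}. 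Imposing $r_\lambda(\alpha)=\alpha$ then reads off as $\alpha_{ij} = r_{\lambda_i,\lambda_j}(\alpha_{ji})$ for all $i,j$, which is exactly the description \eqref{6.4}.

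Finally, for the integral statement \eqref{6.5} I would intersect the description \eqref{6.4} with $\End(A)$ as given in \eqref{6.2}. When each $\lambda_i$ is a principal polarization the maps $\phi_{\lambda_i}$ are genuine isomorphisms, so by the remark following \eqref{6.1} each operator $r_{\lambda_i,\lambda_j}$ carries $\Hom(A_i,A_j)$ into $\Hom(A_j,A_i)$; thus the fixed-point condition preserves integrality and \eqref{6.5} follows by restriction. The one point demanding care — and the main potential obstacle — is the bookkeeping in the third step: duality is contravariant, so one must track correctly that it both transposes the block indices and applies $\widehat{(\cdot)}$ to each block, and that under the chosen identification $\hat A \simeq \prod_i \hat A_i$ the duals $\hat p_i$ and $\hat e_i$ really are the inclusions and projections asserted. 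Once these compatibilities are fixed, the remaining computation is purely formal.
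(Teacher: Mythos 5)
Your proof is correct. The paper itself does not write the argument out: its proof of Proposition \ref{proposition6.1} consists of a citation to \cite[Prop.~61, p.~51]{Kani:2016} for the case $k=2$ with principal polarizations, together with the assertion that the general case ``is proved similarly.'' Your computation is precisely that ``similar'' proof carried out in full generality, and every step checks: additivity of $D\mapsto\phi_D$ and the functoriality $\phi_{p_i^*\lambda_i}=\hat{p}_i\circ\phi_{\lambda_i}\circ p_i$ give the block-diagonal form $\phi_\lambda=\mathrm{diag}(\phi_{\lambda_1},\dots,\phi_{\lambda_k})$ (the verification that $\hat{p}_i$ and $\hat{e}_i$ are, respectively, the inclusions and projections follows from $\hat{e}_j\circ\hat{p}_i=\widehat{p_i\circ e_j}=\delta_{ij}$); contravariance of duality sends $(\alpha_{ij})$ to the hatted transpose, so the $(i,j)$-block of $r_\lambda(\alpha)=\phi_\lambda^{-1}\circ\hat{\alpha}\circ\phi_\lambda$ is $\phi_{\lambda_i}^{-1}\circ\widehat{\alpha_{ji}}\circ\phi_{\lambda_j}=r_{\lambda_i,\lambda_j}(\alpha_{ji})$, matching \eqref{6.1}, and \eqref{6.4} is exactly the fixed-point condition read blockwise. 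One small remark on \eqref{6.5}: since the membership $\alpha_{ij}\in\Hom(A_j,A_i)$ is imposed on \emph{every} block of the right-hand side, the equality already follows from $\End_\lambda(A)=\End(A)\cap\End^0_\lambda(A)$ by intersecting \eqref{6.4} with \eqref{6.2}, with or without principality; the hypothesis that the $\lambda_i$ be principal is rather what makes the description effective, namely it guarantees each $r_{\lambda_i,\lambda_j}$ carries integral homomorphism groups to integral homomorphism groups, so that the blocks with $i\leq j$ can be prescribed freely --- the content of the isomorphisms \eqref{6.10} and \eqref{6.11}. Your appeal to the integrality remark is thus harmless but not strictly needed for the restriction step. (Note also that $\End_\lambda(B)$ in \eqref{6.5} is a typo in the statement for $\End_\lambda(A)$, as $B$ is not defined in the setting of \S\ref{Sec6.2}; your reading is the intended one.)
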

\begin{proof}
See \cite[p.~51]{Kani:2016} for the case $k=2$ and $\lambda_i$ principal polarizations.   The desired more general case is proved similarly.
\end{proof}

A consequence of Proposition \ref{proposition6.1} is:

\begin{corollary}\label{corollary6.2}
If $B = A^r = \underbrace{A \times \dots \times A}_{r \text{-times}}$ and $\lambda^{\boxtimes r} = \sum_{i=1}^r p_i^* \lambda$, for $\lambda$ an ample divisor on $A$, then
\begin{equation}\label{6.6}
\End^0_{\lambda^{\boxtimes r}}(B) = \{ (\alpha_{ij}) : \alpha_{ij} = r_\lambda(\alpha_{ji}) , \alpha_{ij} \in \End^0(A)\}.
\end{equation}
If $\lambda$ determines a principal polarization, then 
\begin{equation}\label{6.7}
\End_{\lambda^{\boxtimes r}}(B) = \{ (\alpha_{ij}) : \alpha_{ij} = r_\lambda(\alpha_{ji}), \alpha_{ij} \in \End(A)\}.
\end{equation}
\end{corollary}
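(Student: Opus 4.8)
The plan is to deduce Corollary \ref{corollary6.2} directly from Proposition \ref{proposition6.1} by specializing to the case in which all factors and all polarizations coincide. First I would take $A_1 = \dots = A_r = A$ and $\lambda_1 = \dots = \lambda_r = \lambda$ in the setting of \S \ref{Sec6.2}, so that $B = A^r$ and the product polarization $\sum_{i=1}^r p_i^* \lambda_i$ becomes precisely $\lambda^{\boxtimes r} = \sum_{i=1}^r p_i^* \lambda$. Under these choices each space $\Hom^0(A_j, A_i)$ is just $\End^0(A)$, and the descriptions \eqref{6.2} and \eqref{6.3} identify $\End(B)$ and $\End^0(B)$ with $r \times r$ matrices over $\End(A)$ and $\End^0(A)$, respectively.

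The one point requiring verification is that the maps $r_{\lambda_i, \lambda_j}$ appearing in Proposition \ref{proposition6.1} reduce to the Rosati involution $r_\lambda$ when $A_i = A_j = A$ and $\lambda_i = \lambda_j = \lambda$. This is immediate upon comparing the defining formula \eqref{6.1}, namely $r_{\lambda_i, \lambda_j}(\eta) = \phi^{-1}_{\lambda_i} \circ \hat{\eta} \circ \phi_{\lambda_j}$, with the definition of the Rosati involution recalled in \S \ref{Sec2.3}, which gives $r_\lambda(\eta) = \phi^{-1}_\lambda \circ \hat{\eta} \circ \phi_\lambda$; the two coincide once the subscripts are set equal. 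With this identification in hand, substituting into \eqref{6.4} yields \eqref{6.6}.

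For the principally polarized statement \eqref{6.7}, I would simply note that when $\lambda$ is a principal polarization the hypothesis of the second half of Proposition \ref{proposition6.1} is met, so \eqref{6.5} applies with the same substitutions and delivers \eqref{6.7}, now with the entries $\alpha_{ij}$ ranging over $\End(A)$ rather than $\End^0(A)$. I do not anticipate any genuine obstacle here: the corollary is a clean specialization of the proposition, and the only care needed is the bookkeeping that identifies $r_{\lambda,\lambda}$ with $r_\lambda$ and confirms that $\sum_{i=1}^r p_i^* \lambda_i$ is the intended box-power polarization $\lambda^{\boxtimes r}$.
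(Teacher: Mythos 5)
Your proposal is correct and is essentially identical to the paper's own proof: both deduce the corollary by specializing Proposition \ref{proposition6.1} to $A_1 = \dots = A_r = A$ and $\lambda_1 = \dots = \lambda_r = \lambda$, observing that $r_{\lambda_i,\lambda_j}$ then coincides with the Rosati involution $r_\lambda$. Your explicit comparison of the defining formula \eqref{6.1} with the definition of $r_\lambda$ in \S\ref{Sec2.3} merely spells out the bookkeeping the paper leaves implicit.
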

\begin{proof}
The conclusion desired by Corollary \ref{corollary6.2} follows from Proposition \ref{6.1} because $\lambda_i = \lambda$, for $i=1,\dots, k$, and so $r_{\lambda_i,\lambda_j}(\alpha_{ji}) = r_\lambda(\alpha_{ji})$.
\end{proof}

\np\label{}{\bf Remark.}  
As in \cite[Prop.~61]{Kani:2016}, the equality \eqref{6.4}, and similarly for \eqref{6.5}, can be seen as an isomorphism
\begin{equation}\label{6.8}
\mathrm{D} = \mathrm{D}_{\lambda_1,\dots, \lambda_k} : \bigoplus_i \End^0_{\lambda_i}(A_i) \oplus \bigoplus_{i<j} \Hom^0(A_i,A_j) \xrightarrow{\sim} \End^0_\lambda(A)
\end{equation}
which induces an isomorphism
\begin{equation}\label{6.9}
\mathrm{D} = \mathrm{D}_{\lambda_1,\dots,\lambda_k} : \bigoplus_i \NS^0(A_i) \oplus \bigoplus_{i<j} \Hom^0(A_i,A_j) \xrightarrow{\sim} \NS^0(A).
\end{equation}
When each of the $\lambda_i$ are principal polarizations, we similarly have isomorphisms
\begin{equation}\label{6.10}
\mathrm{D} = \mathrm{D}_{\lambda_1,\dots, \lambda_k} : \bigoplus_i \End_{\lambda_i}(A_i) \oplus \bigoplus_{i<j} \Hom(A_i,A_j) \xrightarrow{\sim} \End_\lambda(A)
\end{equation}
and
\begin{equation}\label{6.11}
\mathrm{D} = \mathrm{D}_{\lambda_1,\dots,\lambda_k} : \bigoplus_i \NS(A_i) \oplus \bigoplus_{i<j} \Hom(A_i,A_j) \xrightarrow{\sim} \NS(A).
\end{equation}

When $A = E_1 \times E_2$, for $E_i$ elliptic curves, the isomorphisms \eqref{6.10} and \eqref{6.11} are important in establishing the main results of \cite{Kani:2016}.  Here the main focus is not the isomorphisms \eqref{6.8}--\eqref{6.11} but rather the useful \eqref{6.6} which we use to illustrate some of our theorems here, for instance Theorems \ref{theorem4.1} and \ref{theorem4.4}.

\section{Examples}\label{7}
In this section we show how Theorem \ref{theorem4.1}, Corollary \ref{corollary4.2} and Theorem \ref{theorem4.4}  can be used to study the N\'{e}ron-Severi space of the self product of certain simple abelian varieties.  Our calculations, among other things, allow for a more complete understanding of \cite[E.g.~7.1.1, p.~1450036-25]{Grieve-cup-prod-ab-var}.

\np\label{Sec7.1} Let us suppose that $B = A^r$ for $(A,\lambda)$ a principally polarized abelian variety with $\End(A)$ an order $\mathfrak{o}_K$ in $K$, a totally real number field of degree $g$ over $\QQ$.
We then have that 
$\End_\lambda(A) = \End(A) = \mathfrak{o}_K,$ \cite[\S 21, p.~201]{Mum:v1},  
$\End(B) = M_r(\mathfrak{o}_K)$ and $\End^0(B) = M_r(K).$ 

Also, by Corollary \ref{corollary6.2}, it follows that
$$
\End^0_{\lambda^{\boxtimes r}}(B) = \{ (\alpha_{ij}) \in M_r(K) : (\alpha_{ij}) = (\alpha_{ji})\},
$$
the space of symmetric matrices with entries in $K$.  In addition, if $\alpha = (\alpha_{ij}) \in \End^0_\lambda(B)$, then
\begin{equation}\label{7.2}
\deg(\alpha) = \Nr_{K/\QQ}(\det (\alpha));
\end{equation}
here, in \eqref{7.2}, we view $\alpha$ as a $K$-linear operator and so its determinant is relative to $K$.

Note also that if $\alpha = (\alpha_{ij}) \in \End^0_\lambda(B)$ has the form
$\alpha = \Phi_{\lambda^{\boxtimes r}}(D)$ for some $D \in \NS^0(A)$, then
$$
\frac{(D^g)}{g!}= \Nr_{K/\QQ} (\det(\alpha))
$$
and the polynomial $p_{D,\lambda}(N)$ has the form
$$
p_{D,\lambda}(N) = \mathrm{pNrd}_\lambda(N \operatorname{id}_A + \alpha)=\Nr_{K/\QQ} \det(N\operatorname{id}_A + \alpha).
$$

\np\label{Sec7.2}  Let us now restrict the discussion of \S \ref{Sec7.1} to the case that  $g=1$ and $r=2$.  In this case we have that $A = E$, an elliptic curve with $\End(E) = \ZZ$.  Let $\lambda = 0_E$ the identity of $E$.  We then have
$$ \End^0_{\lambda^{\boxtimes 2}}(B) = \left \{ \left( \begin{matrix}  x & y \\ y & w \end{matrix} \right) : \text{ $x,y,w \in \QQ$} \right\},$$
and if $D \in \NS^0(A)$ has image 
$$\Phi_\lambda(D) = \left( \begin{matrix} 
x & y \\ y & w 
\end{matrix}  \right) $$
in $\End^0_{\lambda^{\boxtimes 2}}(B)$, then 
\begin{equation}\label{7.5}
p_{D,\lambda}(N) = \det \left( \begin{matrix}
N+x & y \\ y & N + w
\end{matrix} \right) = N^2 + (x+w)N + wx - y^2.
\end{equation}

Considering the roots of the polynomial \eqref{7.5} we deduce that $N = 0$ is a root of multiplicity $2$ if and only if $x=y=w=0$.  On the other hand, if $N=0$ is a root of multiplicity $1$, then $\ii(D) = 0$ if and only if $x+w > 0$ and $\ii(D) = 1$ if and only if $x+w < 0$.  Similarly, $N = 0$ is not a root if and only if $wx-y^2 \not = 0$ and in this case $\ii(D) = 0$ if and only if $wx-y^2>0$ and $x+w > 0$, $\ii(D) = 1$ if and only if $wx-y^2 < 0$, and $\ii(D) = 2$ if and only if $wx-y^2>0$ and $x+w < 0$.

\np\label{Sec7.3} Next we consider the case that $B = A^r$ for $(A,\lambda)$ a principally polarized abelian variety with $\End^0(A) = Z$ a totally imaginary quadratic extension of a totally real field $K = \End^0_\lambda(A)$ with $[K:\QQ]=g$.
We then have
$ \End^0(B) = M_r(Z)$, 
and also, by Theorem \ref{Albert:Classified} Type IV and Corollary \ref{corollary6.2}, that
$$ \End_{\lambda^{\boxtimes r}}^0(B) = \{(\alpha_{ij}) \in M_r(Z) : \alpha_{ij} = \overline{\alpha_{ij}} \},$$
for $\overline{\alpha_{ij}}$ the complex conjugate of $\alpha_{ij}$.
As a consequence it follows that 
$$
\deg(\alpha) = \Nr_{Z/\QQ} (\det(\alpha))
$$
for $\alpha \in \End^0(B)$.

Also, if $\alpha = \Phi_{\lambda^{\boxtimes r}}(D)$ for some $D \in \NS^0(B)$, then 
$$
\frac{(D^g)}{g!} = \mathrm{pNrd}(\alpha) = \Nr_{K/\QQ} (\det(\alpha)).
$$

\np\label{Sec7.4} As in \S \ref{Sec7.2} we can restrict the discussion given in \S \ref{Sec7.3} to the case that $g=1$ and $r=2$.  In particular, $A = E$ an elliptic curve with $\End(E)$ an order in $Z = \QQ(\sqrt{f})$, for $f \in \ZZ_{<0}$; we let $\lambda = 0_E$ the identity of $E$.  We then have
$$\End^0_{\lambda^{\boxtimes 2}}(B) = \left \{ \left( \begin{matrix} x & z+w\sqrt{f} \\
z-w \sqrt{f} & y  \end{matrix} \right)  : x,y,z,w \in \QQ  \right \}.  $$
Also, if 
$$\alpha = \left( \begin{matrix} 
x_{11}+y_{11} \sqrt{f} & x_{12} + y_{12}\sqrt{f} \\
x_{21} + y_{21}\sqrt{f} & x_{22}+y_{22} \sqrt{f}
\end{matrix} \right) \in \End^0(B),$$ then
$\deg (\alpha) = \Nr_{Z/\QQ} (\det (\alpha)),$ while if 
$$\alpha = \left( \begin{matrix} 
x & z+w\sqrt{f} \\
z-w\sqrt{f} & y 
\end{matrix} \right)  \in \End^0_{\lambda^{\boxtimes 2}}(B),
$$ then
\begin{equation}\label{7.8}
p_{D,\lambda}(N) =  N^2 + (x+y)N-z^2+w^2f + xy.
\end{equation}
Considering \eqref{7.8} we deduce that $N = 0$ is a root with multiplicity $2$ if and only if $x=y=w=z=0$.  On the other hand, if $N =0$ is a root of multiplicity $1$, then $\ii(D) = 0$ if and only if $x+y>0$ and $\ii(D) = 1$ if and only if $x+y < 0$.  Similarly, $N =0$ is not a root if and only if  $-z^2+w^2f + xy \not = 0$ and in this case, $\ii(D) = 0$ if and only if $-z^2+w^2f + xy> 0$ and $x+y > 0$, $\ii(D) =1$ if and only if $-z^2+w^2f + xy < 0$, and $\ii(D) =2$ if and only if $-z^2+w^2f + xy > 0$ and $x+ y < 0$.
 
\np\label{} As our final example we suppose that $p := \cchar \kk > 0$ and consider the case that $A = E \times E$ for $E$ a super-singular  elliptic curve.   We then have that $\End(E)$ is an order in the quaternion algebra
$$ \End^0(E) = \mathcal{K} = \QQ \oplus \QQ\mathbf{a} \oplus \QQ \mathbf{b} \oplus \QQ \mathbf{a}\mathbf{b},$$ that the multiplication in $\mathcal{K}$ satisfies the conditions that
$$\mathbf{a}^2,\mathbf{b}^2 \in \QQ, \mathbf{a}^2 < 0, \mathbf{b}^2 < 0, \text{ and } \mathbf{b} \mathbf{a} = - \mathbf{a} \mathbf{b},$$
and also that $\mathcal{K}$ is ramified exactly at $\{p,\infty\}$, see for instance \cite[p.~100-102 and Ex. 3.18]{silverman:2000}.

Additionally, if 
$\alpha = x + y \mathbf{a} + z \mathbf{b} + w \mathbf{a} \mathbf{b} \in \mathcal{K} \text{, with $x,y,z,w \in \QQ$,}$ then
$\alpha' = x - y \mathbf{a} - z \mathbf{b} - w \mathbf{a} \mathbf{b}$,
$ \Trd_{\mathcal{K} / \QQ}(\alpha) = \alpha + \alpha'$,
and 
$\Nrd_{\mathcal{K}/\QQ}(\alpha) = \alpha \alpha' ;$ 
here $'$ denotes the Rosati involution $r_\lambda$ determined by $\lambda = 0_E$ the identity of $E$.
Let
$ \mathbf{a}^2 = a \text{ and } \mathbf{b}^2 = b.$

Now let $R = \End^0(A)=M_2(\mathcal{K})$, and $' : R \rightarrow R$ the Rosati involution $r_{\lambda^{\boxtimes 2}}$.
Keeping with the conventions of \S \ref{Sec2.1}, we have that
$ d^2 = [R:\QQ] = 16, \text{ and }$
$ t=[Z:\QQ]=1$, 
for $Z$ the centre of $R$, and $m = 2$ the Schur index of $\mathcal{K}$.
The field $F=\QQ(\sqrt{b})$ is a splitting field for $R$ and an isomorphism 
\begin{equation}\label{superspecial:isom} 
F \otimes_\QQ R \xrightarrow{\sim} M_4(F)
\end{equation}
is determined by identifying
$$\alpha = \left( 
\begin{matrix} 
x_{11}+y_{11}\mathbf{a} + z_{11}\mathbf{b} + w_{11}\mathbf{a}\mathbf{b} & x_{12}+y_{12}\mathbf{a} + z_{12}\mathbf{b}+w_{12}\mathbf{a}\mathbf{b} \\
x_{21}+y_{21}\mathbf{a} + z_{21} \mathbf{b} + w_{22}\mathbf{a}\mathbf{b} & x_{22}+y_{22}\mathbf{a} + z_{22}\mathbf{b} + w_{22}\mathbf{a} \mathbf{b}
\end{matrix}
\right), $$
with
$$ \alpha = \left( 
\begin{matrix}
x_{11} + z_{11}\sqrt{b} & y_{11}-w_{11}\sqrt{b} & x_{12}+z_{12}\sqrt{b} & y_{12}-w_{12}\sqrt{b} \\
y_{11}a+w_{11}a\sqrt{b} & x_{11}-z_{11}\sqrt{b} & y_{12}a+w_{12}a\sqrt{b} & x_{12}-z_{12}\sqrt{b} \\
x_{21}+z_{21}\sqrt{b} & y_{21}-w_{21}\sqrt{b} & x_{22}+z_{22}\sqrt{b} & y_{22}-w_{21}\sqrt{b} \\
y_{21}a+w_{21} a\sqrt{b} & x_{21}-z_{21}\sqrt{b} & y_{22}a+w_{22}a \sqrt{b} & x_{22}-z_{22}\sqrt{b}
\end{matrix}
\right).$$

By Corollary \ref{corollary6.2}, the elements of $\End^0_{\lambda^{\boxtimes 2}}(A)$ have the form:
\begin{equation}\label{7.9'} \alpha = 
\left( \begin{matrix}
u & x+y\mathbf{a}+z\mathbf{b}+w\mathbf{a}\mathbf{b} \\
x-y\mathbf{a}-z \mathbf{b} - w \mathbf{a} \mathbf{b} & v 
 \end{matrix} \right), 
 \end{equation} for $v,u,w,x,z,y \in \QQ$, and it follows that $\dim_\QQ \End^0_{\lambda^{\boxtimes 2}}(A)=6$.  In addition, using \eqref{superspecial:isom}, we have the identification, for $\alpha$ as in \eqref{7.9'},
$$ \alpha = \\ \left( 
 \begin{matrix}
 u & 0 & x+z\sqrt{b} & y-w\sqrt{b} \\
 0 & u & ya+wa\sqrt{b} & x-z\sqrt{b} \\
 x-z\sqrt{b} & - y + w \sqrt{b} & v & 0 \\
 -ya - w a \sqrt{b} & x+ z \sqrt{b} & 0 & v
 \end{matrix}
 \right), $$
 for $u,v,x,y,z,w \in \QQ$.
 
We also have, for such $\alpha$, that
$$
\Nrd_{R/\QQ}(\alpha) = \det (\alpha) = ({a b^{2} w^{2}-b^{2} z^{2}-a y^{2}-u v+x^{2}})^2
$$
and it follows, using our normalization conventions  \eqref{3.17}, that
\begin{equation}\label{super:singular:euler} \mathrm{pNrd}_\lambda(\alpha) = -w^2ab+z^2b+y^2 a +uv-x^2. \end{equation}
Equation \eqref{super:singular:euler} implies that if $D$ is a divisor on $A$ with 
\begin{equation}\label{7.11}\Phi_{\lambda^{\boxtimes 2}}(D) = \alpha =
\left( \begin{matrix}
u & x+y\mathbf{a}+z\mathbf{b}+w\mathbf{a}\mathbf{b} \\
x-y\mathbf{a}-z \mathbf{b} - w \mathbf{a} \mathbf{b} & v 
 \end{matrix} \right), 
 \end{equation}
 then 
 \begin{equation}\label{7.12} 
 (D^2)/2 = -w^2ab+z^2b+y^2 a +uv-x^2.
 \end{equation}
 In particular, for a divisor $D$ as in \eqref{7.11}, the polynomial $p_{D,\lambda}(N)$ takes the form:
\begin{equation}\label{7.13}
p_{D,\lambda}(N) = N^2 + (u+v)N+uv-x^2-w^2ab+z^2b+y^2a.
\end{equation}
 
Considering \eqref{7.13} we deduce that $N = 0$ is a root with multiplicity $2$ if and only if $u=v=x=y=w=z=0$ and that if $N =0$ is a root of multiplicity $1$, then $\ii(D) = 0$ if and only if $u+v>0$ and $\ii(D) = 1$ if and only if $u+v < 0$.  On the other hand, $N =0$ is a root if and only if  $uv-x^2-w^2ab+z^2b+y^2a \not = 0$ and in this case, $\ii(D) = 0$ if and only if $-w^2ab+z^2b+y^2a+uv-x^2 > 0$ and $u+v > 0$, 
$\ii(D) = 1$ if and only if $-w^2ab+z^2b+y^2a+uv-x^2 < 0$, and 
$\ii(D) = 2$ if and only if $-w^2ab+z^2b+y^2a+uv-x^2 > 0$ and $u+v < 0$.
 
Let also also note that \eqref{7.12} can be used to compute intersection numbers.  Indeed,  suppose that $D_1$ and $D_2$ are divisors on $A$ with 
 $$ \Phi_{\lambda^{\boxtimes 2}} (D_1) = \left(\begin{matrix}
 u_1 & x_1+y_1\mathbf{a}+z_1\mathbf{b} + w_1\mathbf{a}\mathbf{b} \\
 x_1-y_1\mathbf{a}-z_1\mathbf{b}-w_1\mathbf{a} \mathbf{b}  v_1 & v_1
 \end{matrix} \right)$$
 and
 $$ \Phi_{\lambda^{\boxtimes 2}} (D_2) = \left( \begin{matrix}  
 u_2 & x_2+y_2\mathbf{a}+z_2 \mathbf{b} +  w_2 \mathbf{a} \mathbf{b} \\
 x_2-y_2\mathbf{a}-z_2\mathbf{b}-w_2\mathbf{a}\mathbf{b} &  v_2
 \end{matrix}\right).$$
 
If we fix indeterminates $T$ and $S$, we then have
\begin{multline*}
((T D_1+ SD_2)^2)/2 = ( -w_1^2ab+z_1^2b+ay_1^2+u_1v_1-x_1)T^2+ \\ 
(2w_1w_2ab+2z_1z_2b+2y_1y_2a+u_1v_2+u_2v_1-2x_1x_2)ST+ \\
(-w_2^2ab+bz_2^2+ay_2^2+u_2v_2-x_2^2)T^2, 
\end{multline*} 
from which we deduce
$$
D_1.D_2= 2w_1w_2ab+2z_1z_2b+2y_1y_2a+u_1v_2+u_2v_1-2x_1x_2. 
$$

\providecommand{\bysame}{\leavevmode\hbox to3em{\hrulefill}\thinspace}
\providecommand{\MR}{\relax\ifhmode\unskip\space\fi MR }
% \MRhref is called by the amsart/book/proc definition of \MR.
\providecommand{\MRhref}[2]{%
  \href{http://www.ams.org/mathscinet-getitem?mr=#1}{#2}
}
\providecommand{\href}[2]{#2}

\end{document}